\numberwithin{equation}{section}
\newtheorem{theorem}[equation]{Theorem}
\newtheorem{proposition}[equation]{Proposition}
\newtheorem{lemma}[equation]{Lemma}
\theoremstyle{definition}
\newtheorem{definition}[equation]{Definition}
\newtheorem{example}[equation]{Example}
\def\C{\mathbb C}
\def\Dom{\mathcal D}
\def\K{\mathcal K}
\def\L{\mathscr L}
\def\R{\mathbb R}
\def\Sing{\mathcal E}
\def\eps{\varepsilon}
\def\Gr{\mathrm{Gr}}
\def\minus{\backslash}
\def\open#1{\smash[t]{\overset{{}_{\circ}}{#1}{}}}
\def\set#1{\{#1\}}
\def\st{:}
\DeclareMathOperator{\bgres}{bg-res}
\DeclareMathOperator{\bgspec}{bg-spec}
\DeclareMathOperator{\Ind}{ind}
\DeclareMathOperator{\rg}{rg}
\DeclareMathOperator{\spec}{spec}
\DeclareMathOperator{\rk}{rk}
\begin{document}
\title{A conic manifold perspective of elliptic operators on graphs}

\author{Juan B. Gil}
\address{Penn State Altoona\\ 3000 Ivyside Park \\ Altoona, PA 16601-3760}
\email{jgil@psu.edu}
\author{Thomas Krainer}
\address{Penn State Altoona\\ 3000 Ivyside Park \\ Altoona, PA 16601-3760}
\email{krainer@psu.edu}
\author{Gerardo A. Mendoza}
\address{Department of Mathematics\\ Temple University\\ Philadelphia, PA 19122}
\email{gmendoza@math.temple.edu}

\begin{abstract}
We give a simple, explicit, sufficient condition for the existence of
a sector of minimal growth for second order regular singular 
differential operators on graphs.  We specifically consider operators 
with a singular potential of Coulomb type and base our analysis on the 
theory of elliptic cone operators.  
\end{abstract}

\subjclass[2000]{Primary: 34B45; Secondary: 34B24, 47A10}
\keywords{Boundary conditions on graphs/networks, singular coefficients, 
ray/sector of minimal growth}

\maketitle

\section{Introduction}

The analysis of differential operators on graphs is an area of current
interest with a long tradition (cf. \cite{vonBelow1,KostrykinSchrader1,
KostrykinSchrader2,SavchukShkalikov}, in particular the survey article
\cite{Kuchment2}).

In this paper we adopt the point of view that a graph is a one-dimensional 
manifold with conical singularities, and that a differential operator on 
it with smooth coefficients and regular singular points at most at the 
endpoints of the edges is a cone differential operator.  
Among these, the simplest ones are second order operators whose
coefficients are smooth up to the boundary, except for the potential term,
which is permitted to have Coulomb type singularities.
This class of operators is interesting not just intrinsically but also
because it is already one for which the notion of boundary values is not
trivial. The aim of the present paper is to analyze in detail the
existence of sectors of minimal growth for the latter class of operators, 
the main point being that this can be done explicitly and in simple terms. 
We follow the approach developed in \cite{GKM1,GKM2,GKM3} on general elliptic
cone operators.
At the end of the paper we shall indicate how the general situation on a 
graph can be analyzed with the same tools.

Recall that a closed sector
\begin{equation*}
\Lambda = \{re^{i\varphi}\st r \geq 0,\; |\varphi - \varphi_0| \leq a\} 
\subset \C
\end{equation*}
is a sector of minimal growth for the closed operator
\begin{equation}\label{TheOperator}
A : \Dom \subset H \to H
\end{equation}
acting in the Hilbert space $H$ if and only if $A - \lambda : \Dom \to H$ 
is invertible for $\lambda \in \Lambda$ with $|\lambda| > 0$ sufficiently
large with the resolvent satisfying the estimate
\begin{equation}\label{Decay}
\|(A-\lambda)^{-1}\|_{\L(H)} = O(|\lambda|^{-1})
\quad\text{as } |\lambda| \to \infty.
\end{equation}

The existence of sectors of minimal growth is directly related to the 
well posedness and maximal regularity for parabolic evolution equations, 
see e.g. \cite{EngelNagel}.   
Going further into detail, the asymptotic behavior of resolvents provides
insight about heat trace asymptotics, zeta functions, and other
spectral invariants.

Consider a disjoint union of closed intervals,
\begin{equation*}
G = \bigsqcup_{j=1}^N E_j.
\end{equation*}
In particular, $G$ is a one-dimensional compact manifold with boundary $\partial G$. A relation of equivalence on $\partial G$, extended trivially to all of $G$, gives rise to a graph $\Gamma=G/{\sim}$ with vertices $V=\partial G/{\sim}$ and edges $E_j$. Let $\open \Gamma=\Gamma\minus V$. The canonical diffeomorphism $\open G\to\open \Gamma$ allows us to conduct analysis on $\Gamma$.

As indicated above, we focus on second order differential operators
\begin{equation}\label{AGlobal}
A : C_c^{\infty}(\open \Gamma) \to C_c^{\infty}(\open \Gamma)
\end{equation}
which on each $E_j \cong [-1,1]$, $j=1,\ldots,N$, are of the form
\begin{equation}\label{ALocalEdge}
A_j = a_j D_s^2 + b_j D_s + \frac{c_j}{(1-s)(1+s)} : C_c^{\infty}(-1,1)
\to
C_c^{\infty}(-1,1),
\end{equation}
where $D_s = \frac{1}{i}\partial_s$ and $s$ is the variable in $[-1,1]$. The coefficients $a_j$, $b_j$, and $c_j$ are assumed to be functions on $[-1,1]$, smooth up to the endpoints. In other words, as already mentioned, the potential terms are permitted to have Coulomb singularities at the endpoints, and so the differential operator $A$ may be singular at the vertices of the graph. Furthermore, we assume $A$ to be elliptic, i.e., in \eqref{ALocalEdge} we assume $a_j(s)\not=0$ on $[-1,1]$.

Let $L^2(G)$ be the $L^2$-space with respect to (any) smooth metric on $G$. With the push-forward measure we have $L^2(\Gamma)\cong L^2(G)$. Every natural closed extension of
\begin{equation*}
A : C_c^\infty(\open \Gamma) \subset L^2(\Gamma) \to L^2(\Gamma)
\end{equation*}
has domain $\Dom$ contained in 
\begin{equation*}
\Dom_{\max}(A) = \set{u \in L^2(\Gamma)\st Au \in L^2(\Gamma)}
\end{equation*}
and containing $\Dom_{\min}(A)$, the closure of $C_c^\infty(\open
\Gamma)\subset \Dom_{\max}(A)$ with respect to the graph norm
\begin{equation*}
\|u\|_A = \|u\|_{L^2(\Gamma)} + \|Au\|_{L^2(\Gamma)}.
\end{equation*}
The operator $A$ is Fredholm with either of these canonical domains and therefore so is $A_\Dom$, the operator
\begin{equation}\label{AwithDomain}
A:\Dom\subset L^2(\Gamma)\to L^2(\Gamma),
\end{equation}
whenever $\Dom$ is a subspace of $\Dom_{\max}(A)$ containing
$\Dom_{\min}(A)$. In particular, the quotient
$\Dom_{\max}(A)/\Dom_{\min}(A)$ is a finite-dimensional space. Lesch's
book \cite{Le97} is a systematic account of these and many other aspects
of general elliptic cone operators, including the fact that such
operators, on compact manifolds with conical singularities, are Fredholm
operators with any of the domains $\Dom$. For ordinary differential
equations these facts can of course be verified directly.

Choosing a domain amounts to selecting a subspace of
$\Dom_{\max}(A)/\Dom_{\min}(A)$, which (as is well known) amounts
essentially to prescribing linear homogeneous relations between the values
and ``first derivatives'' of elements $u\in \Dom_{\max}(A)$ at $\partial G$.
Such a choice is in principle completely arbitrary,
and may have nothing to do with the graph in question. There is, however,
a subclass of domains that are, in a natural sense, compatible with, or
which respect, the original graph. Among these we will single out a
smaller class of domains that are specified by what we shall call 
admissible coupling conditions, fully described in Section~\ref{s-DomainsAndNetworks}. Aside from being
compatible with the graph, coupling conditions ensure that the
operator \eqref{AwithDomain} has index zero (which is necessary for
an operator to have a nonempty resolvent set).

It is easy to see that in order for $A_\Dom$ to have $\Lambda$ as a sector
of minimal growth, none of the coefficients $a_j$ in \eqref{ALocalEdge}
should have values in $\Lambda$. But this condition is in general not
sufficient. One needs to also analyze whether $\Lambda$ is a sector of
minimal growth for the so called model operator $A_{p,\wedge}$ of $A$ at
a vertex $p$ with a specified domain determined by $\Dom$.
Section~\ref{Sec-ModelOperator} concerns the precise definition of
$A_{p,\wedge}$ and an analysis of its spectrum,
while Section~\ref{Sec-DecModelOperator} concerns sectors of minimal
growth. Thus these two sections form the core of this paper; 
the arguments rely on \cite{GKM1,GKM3}.

The results of Section~\ref{Sec-DecModelOperator} are assembled to give
Theorem~\ref{SectorMinGrowthGraph}, the main result of this paper.
Finally, in Section~\ref{Sec-ConeOperators} we indicate how the analysis
can be extended to the case where the operators $A_j$ have arbitrary order
and regular singular points at most at the endpoints of the edges.

\section{Domains and graphs}\label{s-DomainsAndNetworks}

Since $A$ is elliptic and the principal part is regular and smooth up to the boundary, we have $\Dom_{\min}(A) = H_0^2(G)$, the space of functions that are $H^2$-regular on $G$ and vanish to second order at the vertices.

The functions in $\Dom_{\max}(A)$ also exhibit $H^2$ regularity in the interior. However, since $A$ is singular, these functions are not regular up to the boundary. To describe their behavior near $\partial G$, choose once and for all a smooth defining function $x$ for $G$, i.e., $x\geq 0$, and $x=0$ precisely on $\partial G$ with $dx\ne 0$ at each $q\in \partial G$. For instance, we can choose $x=1-s^2$ on $E_j \cong [-1,1]$. Using $x$
as coordinate near an endpoint $q\in \partial G$ on the interval $E_j$
with $q\in \partial E_j$ (so $x \in [0,\eps)$, $\eps>0$ small, and $x=0$
at $q$), we have
\begin{equation}\label{AtAnEndpoint}
A_j = a_q D_x^2 + b_q D_x + \frac{c_q }{x}
\end{equation}
for some functions $a_q$, $b_q$, $c_q$, smooth near $q$, $a_q\ne 0$ at
$q$. Modulo $H_0^2([0,\eps))$ the generic asymptotic behavior of an
arbitrary $u \in \Dom_{\max}(A)$ near $q$ is
\begin{equation}\label{AsympAnsatz}
u(x)|_{E_j} \sim \alpha_q + \gamma_q x\log x + \beta_q x \quad
\text{as }x \to 0,
\end{equation}
in principle with arbitrary constants $\alpha_q$, $\beta_q$, $\gamma_q\in
\C$; this follows from analyzing the indicial polynomial of $A_j$ at $q$, see for example \cite{BK}.
Using \eqref{AtAnEndpoint} we get modulo $L^2$
\begin{align*}
A_ju(x) &\sim \Big(a_q(x) D_x^2 + b_q(x)D_x + \frac{c_q(x)}{x}\Big)
      (\alpha_q + \gamma_q x\log(x) + \beta_q x) \\
 &= -a_q(x)\frac{\gamma_q}{x}+\frac{1}{i}b_q(x)
    \big(\gamma_q\log x+\gamma_q+\beta_q\big) + \frac{c_q(x)}{x}
    \big(\alpha_q+\gamma_qx\log x+\beta_q x\big) \\
 &\sim \frac{-a_q(0)\gamma_q+c_q(0)\alpha_q}{x}
\end{align*}
as $x\to 0$ for every $j=1,\dots,k_p$.  Thus, $Au(x)|_{E_j}$ is in $L^2$ as $x\to 0$ if and only if
\begin{equation*}
\gamma_q= \frac{c_q(0)}{a_q(0)}\alpha_q.
\end{equation*}
Let $\omega_q\in C_c^\infty(E_j)$ be equal to $1$ near $q$ and equal to $0$ near the other endpoint of $E_j$. Then \begin{equation}\label{GenericUinEq}
u|_{E_j}=\omega_q \big(\alpha_q(1+\frac{c_q(0)}{a_q(0)}x\log x)+\beta_q
x\big)+v_q
\end{equation}
where $v_q\in H^2_0([0,\eps))$ near $q$. Summing up, we have:

\begin{lemma}\label{DefEsubq}
For each $q\in \partial G$ let $\Sing_q(A)$ be the space of functions on
$\R_+$ of the form
\begin{equation}\label{aSingFunction}
\alpha_q(1+\frac{c_q(0)}{a_q(0)}x\log x)+\beta_q x,
\end{equation}
and let $\omega_q\Sing_q(A)$ mean the space obtained by multiplying the
elements of $\Sing_q(A)$ by $\omega_q$ and regarded as functions on $G$
supported on $E_j$.
Then
\begin{equation}\label{aSingFunction11}
\Dom_{\max}(A)=\Dom_{\min}(A)+ \bigoplus_{q\in \partial G}
\omega_q\Sing_q(A),
\end{equation}
and so
\begin{equation}\label{EndPointDecomposition}
\Dom_{\max}(A)/\Dom_{\min}(A) \cong \bigoplus_{q\in \partial G} \Sing_q(A)
\end{equation}
canonically.
\end{lemma}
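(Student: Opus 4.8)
The plan is to verify that the right-hand side of \eqref{aSingFunction11} exhausts $\Dom_{\max}(A)$ and that the sum is direct, the quotient statement \eqref{EndPointDecomposition} then being immediate. Everything has in fact been set up by the computation preceding the lemma, so the proof is mostly a matter of assembling the pieces carefully. First I would recall that $\partial G$ consists of $2N$ points, two on each $E_j\cong[-1,1]$, and that the functions $\omega_q$ localize near a single endpoint $q$ while vanishing identically near the other endpoint of the same edge; in particular $\omega_q\omega_{q'}\equiv 0$ whenever $q\ne q'$, so the various summands $\omega_q\Sing_q(A)$ live in mutually ``disjoint'' collars and the only overlap with a given $u$ is localized near each vertex separately.

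Next I would establish the inclusion ``$\supseteq$'': each $\omega_q$ times a function of the form \eqref{aSingFunction} is smooth in the interior, is $O(1)$ as $x\to 0$ hence in $L^2$ near $q$, and by the displayed computation before the lemma, applying $A_j$ to $\alpha_q(1+\tfrac{c_q(0)}{a_q(0)}x\log x)+\beta_q x$ produces a function whose potentially singular $1/x$ term has coefficient $-a_q(0)\gamma_q+c_q(0)\alpha_q$ with $\gamma_q=\tfrac{c_q(0)}{a_q(0)}\alpha_q$, which vanishes; the remaining terms ($\log x$ and bounded) are in $L^2([0,\eps))$, and away from $q$ one only differentiates the cutoff, producing compactly supported smooth functions. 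Hence $\omega_q\Sing_q(A)\subset\Dom_{\max}(A)$, and of course $\Dom_{\min}(A)=H^2_0(G)\subset\Dom_{\max}(A)$.

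For the reverse inclusion ``$\subseteq$'' I would take an arbitrary $u\in\Dom_{\max}(A)$. Interior $H^2$-regularity is standard elliptic regularity for the nondegenerate operator $a_jD_s^2+\cdots$ on compact subsets of $\open G$. Near each $q\in\partial G$ one invokes the indicial-root analysis of $A_j$ cited to \cite{BK}: the indicial polynomial of \eqref{AtAnEndpoint} has roots $0$ and $1$ (double root $0$ after the logarithmic resonance is accounted for), giving the ansatz \eqref{AsympAnsatz} modulo $H^2_0$, and then the constraint $Au\in L^2$ forces $\gamma_q=\tfrac{c_q(0)}{a_q(0)}\alpha_q$ exactly as computed, yielding the representation \eqref{GenericUinEq} near $q$. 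Subtracting $\sum_{q\in\partial G}\omega_q\bigl(\alpha_q(1+\tfrac{c_q(0)}{a_q(0)}x\log x)+\beta_q x\bigr)$ from $u$ leaves a function that is $H^2$ in the interior and lies in $H^2_0$ near every vertex, hence globally in $H^2_0(G)=\Dom_{\min}(A)$; this proves \eqref{aSingFunction11}.

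Finally, directness: if $\sum_{q}\omega_q\sigma_q\in\Dom_{\min}(A)=H^2_0(G)$ with $\sigma_q\in\Sing_q(A)$, then near each $q$ the function $\omega_q\sigma_q=\sigma_q$ itself must lie in $H^2_0([0,\eps))$, i.e.\ vanish to second order at $x=0$; but an element $\alpha_q(1+\tfrac{c_q(0)}{a_q(0)}x\log x)+\beta_q x$ that is $o(x)$ as $x\to 0$ forces $\alpha_q=0$ and then $\beta_q=0$, so $\sigma_q=0$ for every $q$. This gives the internal directness of $\bigoplus_{q}\omega_q\Sing_q(A)$ and its having trivial intersection with $\Dom_{\min}(A)$, whence \eqref{EndPointDecomposition}. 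The only genuinely nontrivial input is the asymptotic expansion \eqref{AsympAnsatz}, i.e.\ the claim that \emph{every} maximal-domain element has the stated behavior modulo $H^2_0$ with no further terms; this is the regular-singular ODE asymptotics from \cite{BK} and is the step I would lean on rather than reprove, everything else being bookkeeping with the cutoffs $\omega_q$ and the explicit $1/x$ computation already displayed.
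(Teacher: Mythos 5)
Your proposal is correct and follows essentially the same route as the paper: the paper's ``proof'' is precisely the computation preceding the lemma (the indicial/Frobenius asymptotics \eqref{AsympAnsatz} cited to \cite{BK}, the cancellation of the $1/x$ term forcing $\gamma_q=\tfrac{c_q(0)}{a_q(0)}\alpha_q$, and the cutoff bookkeeping giving \eqref{GenericUinEq}), and you assemble exactly these pieces together with the easy directness check. The only quibble is your parenthetical ``double root $0$'': the indicial roots are $0$ and $1$, and the $x\log x$ term comes from the integer difference of the roots (resonance when $c_q(0)\neq 0$), not from a repeated root --- but this aside does not affect the argument.
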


We note in particular that there is a one-to-one onto correspondence between domains and subspaces of $\Dom_{\max}(A)/\Dom_{\min}(A)$, so with subspaces of
\begin{equation*}
\Sing(A)=\bigoplus_{q\in \partial G} \Sing_q(A).
\end{equation*}
For this reason we will often refer to domains simply as subspaces of $\Sing(A)$, and view them as elements of the various Grassmannians $\Gr_\ell(\Sing(A))$ of $\ell$-dimensional subspaces of $\Sing(A)$.

If $p\subset \partial G$, let
\begin{equation}\label{VertexDecomposition2}
\Sing_p(A)=\bigoplus_{q\in p} \Sing_q(A),
\end{equation}
and let
\begin{equation*}
\pi_p:
\Sing(A)\to \Sing_p(A)
\end{equation*}
be the canonical projection.
\begin{definition}
Let $\sim$ be a relation of equivalence on $\partial G$, let $V=G/{\sim}$
and let $\Gamma$ be the corresponding graph. We shall say that a domain
$\Dom$ and a graph $\Gamma$ are compatible if
\begin{equation*}
\Dom/\Dom_{\min}(A) = \bigoplus_{p\in V} \pi_p(\Dom/\Dom_{\min}(A)).
\end{equation*}
\end{definition}
For example, every domain is compatible with the graph with single vertex $\partial G$. The domain generated by the elements \eqref{aSingFunction} with $\alpha_q=0$ for all $q$ (the Dirichlet domain) is compatible with $G$, as are $\Dom_{\min}(A)$ and $\Dom_{\max}(A)$.

For any domain $\Dom$ there is a compatible graph with maximal number of vertices. The domains that are compatible with a given a graph $\Gamma$ with vertices $V$ are simply those of the form
\begin{equation}\label{VertexDecomposition1}
\Dom/\Dom_{\min}(A)=\bigoplus_{p\in V} \Dom_p,\ \Dom_p \subset \Sing_p(A).
\end{equation}
Denote by $R\subset \partial G\times \partial G$ the relation of
equivalence determining the vertices of $\Gamma$. Any other relation of
equivalence containing $R$ will give a graph also compatible with $\Dom$.
The set of relations of equivalence giving graphs compatible with
$\Dom$ forms a partially ordered set (ordered by inclusion) with a unique
minimal element. The graph associated with that minimal relation of
equivalence might be called the graph determined by $\Dom$.     

\medskip
Since $G$ has $N$ edges, \eqref{EndPointDecomposition} gives
\begin{equation} \label{MaxMinDimension}
\dim \Dom_{\max}(A)/\Dom_{\min}(A) = 4N.
\end{equation}

\begin{proposition}\label{InjectiveSurjective}
$A:\Dom_{\min}(A)\to L^2(\Gamma)$ is injective and $A:\Dom_{\max}(A)\to L^2(\Gamma)$
is surjective.
\end{proposition}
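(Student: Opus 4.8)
The plan is to derive both assertions from the local structure of solutions of $A_jv=0$ near an endpoint $q\in\partial G$, together with elementary uniqueness for the regular ODE in the interior. Since $\Dom_{\min}(A)$ and $\Dom_{\max}(A)$ both split as direct sums over the edges $E_j$ and $A$ acts edgewise, it suffices to work on one edge $E_j\cong[-1,1]$ and near each of its two endpoints. Near such an endpoint $q$, in the coordinate $x$ of \eqref{AtAnEndpoint}, the indicial roots of $A_j$ are $0$ and $1$; the Frobenius analysis that produces \eqref{AsympAnsatz} (cf.\ \cite{BK}) shows that the two-dimensional solution space of $A_jv=0$ on a punctured neighbourhood of $q$ is spanned by functions
\begin{equation*}
v_0(x)=1+\tfrac{c_q(0)}{a_q(0)}\,x\log x+O(x),\qquad v_1(x)=x+O(x^2),
\end{equation*}
and, by \eqref{aSingFunction11}--\eqref{EndPointDecomposition}, the images of $v_0$ and $v_1$ in $\Sing_q(A)$ are linearly independent (in fact they span it).

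\textbf{Injectivity of $A_{\min}$.} Let $u\in\Dom_{\min}(A)$ with $Au=0$. On each edge $E_j$ the function $u$ solves $A_ju=0$, so near an endpoint $q$ we may write $u=\alpha v_0+\beta v_1$. Since $u\in\Dom_{\min}(A)$, its class in $\Dom_{\max}(A)/\Dom_{\min}(A)$ vanishes, hence so does $\alpha[v_0]+\beta[v_1]$ in $\Sing_q(A)$; by linear independence $\alpha=\beta=0$, i.e.\ $u\equiv0$ on a neighbourhood of $q$. As the coefficients of $A_j$ are smooth on $(-1,1)$ with $a_j\ne0$ there, $u$ solves a regular linear second-order ODE on the open interval, and vanishing near an endpoint forces $u\equiv0$ on all of $E_j$ by uniqueness for the initial value problem. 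Doing this on every edge gives $u=0$.

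\textbf{Surjectivity of $A_{\max}$.} I would obtain this from the injectivity statement by duality. The formal adjoint $A^{\star}$ of $A$ with respect to the fixed inner product of $L^2(\Gamma)$ is again an operator of the class \eqref{ALocalEdge}: on each edge the metric density is smooth and positive up to the endpoints, integration by parts moves the two derivatives off $u$ at the expense only of derivatives of that density and of the smooth coefficients $a_j,b_j$, while the potential, being a multiplication operator, contributes $\overline{c_j}/\big((1-s)(1+s)\big)$; thus $A^{\star}$ has smooth coefficients and is elliptic ($\overline{a_j}\ne0$). The Hilbert space adjoint of the maximal extension of $A$ is the minimal extension of $A^{\star}$ (standard for regular singular ODE operators, cf.\ \cite{Le97}), so by the injectivity just established, applied now to $A^{\star}$, the kernel of $(A_{\max})^{*}$ is trivial; since $A_{\max}$ is Fredholm its range is closed, whence the range of $A_{\max}$ equals $\big(\ker(A_{\max})^{*}\big)^{\perp}=L^2(\Gamma)$. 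Alternatively one may argue directly: given $f\in L^2(\Gamma)$, solve $A_ju=f_j$ on each edge by variation of parameters from a fundamental system of $A_jv=0$; since such solutions are bounded near the endpoints (the indicial roots being $\ge0$), the Wronskian stays bounded away from $0$ there by Abel's identity, and $f_j\in L^2\subset L^1$, the resulting solution is bounded near the endpoints, hence in $L^2$, and satisfies $A_ju=f_j\in L^2$, so it lies in $\Dom_{\max}(A)$.

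\textbf{Main obstacle.} The only genuinely non-formal ingredient is the behaviour of solutions of $A_jv=0$ near the singular endpoints — that they are bounded, span a two-dimensional space, and organize according to the indicial roots $0$ and $1$ with at most a logarithmic resonance — but this is exactly the Frobenius computation already used for \eqref{AsympAnsatz} and available in \cite{BK}. The remaining points (linear independence of the images in $\Sing_q(A)$, that $A^{\star}$ belongs to the class, and the identification $(A_{\max})^{*}=(A^{\star})_{\min}$) are routine; the mildly delicate one is keeping track of the metric density when passing to $A^{\star}$.
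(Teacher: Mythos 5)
Your proposal is correct, but the crucial step --- injectivity of $A_{\min}$ --- is proved by a genuinely different argument than the paper's. You argue locally: a solution of $Au=0$ lying in $\Dom_{\min}(A)$ is, near each endpoint $q$, an exact combination $\alpha v_0+\beta v_1$ of the Frobenius fundamental system, its vanishing class in $\Sing_q(A)$ forces $\alpha=\beta=0$ because $[v_0]$ and $[v_1]$ are independent there, and uniqueness for the regular interior ODE then kills $u$ on the whole edge. The paper instead argues by index arithmetic: since the distributional kernel on each edge is two-dimensional, $\dim\ker(A_{\Dom})\le 2N$ and $\dim\ker(A_{\Dom}^*)\le 2N$ for every intermediate domain, so $|\Ind A_{\Dom}|\le 2N$; combined with $\Ind A_{\max}=\Ind A_{\min}+4N$ (from $\dim\Dom_{\max}(A)/\Dom_{\min}(A)=4N$) this forces $\Ind A_{\min}=-2N$, and $\dim\ker A_{\min}=0$ then drops out of $\dim\ker A_{\min}^*\le 2N$. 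For surjectivity you and the paper use the same reduction, $(A_{\max})^*=(A^\star)_{\min}$ with $A^\star$ of the same class together with closed range (Fredholmness, cited to Lesch), although your variation-of-parameters alternative gives surjectivity directly without duality. What each approach buys: yours is explicit and elementary but leans on the precise Frobenius structure --- in particular on the remainders of $v_0,v_1$ lying in $H^2_0$ near $q$ so that their classes in $\Sing_q(A)$ are as claimed, a point you correctly flag as the only non-formal ingredient; the paper's soft counting argument uses nothing beyond the dimension of the local solution space and of $\Dom_{\max}(A)/\Dom_{\min}(A)$, so it transfers verbatim to the higher-order regular singular operators of Section~\ref{Sec-ConeOperators}, and it yields $\Ind A_{\min}=-2N$ (hence $\Ind A_{\max}=2N$) as a useful byproduct, which your argument does not directly provide.
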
 
\begin{proof}
Let $A_{\min}=A|_{\Dom_{\min}(A)}$ and $A_{\max}=A|_{\Dom_{\max}(A)}$. If
$A^\star$ denotes the formal adjoint of $A$, then the $L^2$-adjoint
$A_{\max}^*$ of $A_{\max}$ is precisely $A^\star_{\min}$. Thus, since $\rg
(A_{\max})$ is orthogonal to $\ker(A_{\max}^*)$, and since $A$ and
$A^\star$ are of the same form, it is enough to check the injectivity of
$A_{\min}$.

Observe that if we consider $A$ acting on distributions over
$\open \Gamma$, then its kernel satisfies the relation
\begin{equation*}
\ker(A) \cong \bigoplus_{j=1}^N \ker (A_j),
\end{equation*}
where $A_j$ denotes the restriction of $A$ to $E_j$ acting on
distributions over $\open E_j$.

Now, for each operator $A_j$ we have $\dim \ker (A_j)=2$, so $\dim
(L^2\cap\ker A_j)\leq 2$.  Consequently, for every domain
$\Dom_{\min}\subset \Dom \subset
\Dom_{\max}$,
\begin{equation*}
\dim\ker(A_{\Dom})\le 2N \;\text{ and }\; 
   \dim\ker(A_{\Dom}^*)\le 2N
\end{equation*}
since $A_{\Dom}^*$ is a closed extension of $A^\star$. Therefore,
\begin{equation*}
-2N\le \Ind A_{\Dom} = \dim\ker(A_{\Dom}) - \dim\ker(A_{\Dom}^*) \le 2N.
\end{equation*}
In particular,
\begin{equation*}
-2N\le \Ind A_{\min} \quad\text{and}\quad \Ind A_{\max}\le 2N.
\end{equation*}
On the other hand,
\begin{align*}
\Ind A_{\max} &= \Ind A_{\min} + \dim \Dom_{\max}/\Dom_{\min} \\
 &= \Ind A_{\min} + 4N 
\end{align*}
by \eqref{MaxMinDimension}. Thus $\Ind A_{\min} + 4N\le 2N$ and so $\Ind
A_{\min} \le -2N$. Hence $\Ind A_{\min} = -2N$.  Finally, since $\dim \ker
A_{\min}^*\le 2N$, we get
\begin{equation*}
-2N = \dim \ker A_{\min} - \dim \ker A_{\min}^* \ge \dim \ker A_{\min} -
2N,
\end{equation*}
which implies $\dim \ker A_{\min}=0$.
\end{proof}

As a consequence of \eqref{MaxMinDimension} and Proposition~\ref{InjectiveSurjective} we see that the domains $\Dom$ for which \eqref{AwithDomain} has index $0$ satisfy $\dim\Dom/\Dom_{\min}(A)=2N$. Such a domain can be specified as the kernel of a surjective linear map 
\begin{equation}\label{DomainAsKernel}
\gamma:\Sing(A)\to\C^{2N}.
\end{equation}
We will write
\begin{equation*}
\Dom_\gamma = \set{u \in \Dom_{\max}(A)\st \gamma(u/\Dom_{\min}(A)) = 0}
\end{equation*}
for the domain associated with such $\gamma$.
Of course, composing $\gamma$ with an isomorphism of $\C^{2N}$ gives a map
$\tilde\gamma$ with the same kernel as $\gamma$; we will regard
$\gamma$ and $\tilde\gamma$ as equivalent. 

Fix a graph $\Gamma=G/{\sim}$. The domains that are compatible with $\Gamma$ are of the form \eqref{VertexDecomposition1}. Among these we single out those given as follows. Let $k_p=\#p$ be the cardinality of $p$, let
\begin{equation}\label{CpSingp}
\gamma_p:\Sing_p(A)\to \C^{k_p}
\end{equation}
be linear and surjective, and let 
\begin{equation}\label{AssembleDomainAsKernel}
\gamma:\bigoplus_{p\in V}\Sing_p(A)\to \bigoplus_{p\in V}\C^{k_p}
\end{equation}
be the obvious block-diagonal map determined by the $\gamma_p$. Since
$\sum_{p\in V} k_p=2N$, this is a surjective map \eqref{DomainAsKernel}.

\begin{definition}
An admissible coupling condition is a condition $\gamma u=0$, where
$\gamma$ is a block-diagonal map \eqref{AssembleDomainAsKernel} for which the restrictions \eqref{CpSingp} are surjective. 
\end{definition}
Two admissible coupling conditions are regarded as equivalent if one can
be obtained from the other by composition on the left with an invertible
(block-diagonal) operator.  The set of these equivalence classes is in 
one-to-one correspondence with the submanifold 
\begin{equation*}
\prod_{p\in V}\Gr_{k_p}(\Sing_p(A))
\end{equation*}
of $\Gr_{2N}(\Sing(A))$, the Grassmannian of domains of index $0$ for $A$.

\medskip
Fixing a defining function $x$, as we have done, fixes a basis of $\Sing(A)$, so we may express each $\gamma_p$ (and of course also $\gamma$) as a matrix. Let $\set{q_1,\dotsc,q_{k_p}}$ be an enumeration of the elements of $p$. Then
\begin{equation}\label{MatrixVertexGlobal}
\gamma_p : \sum_{j=1}^{k_p} \big(
 \alpha_{q_j} (1 + \tfrac{c_{q_j}(0)}{a_{q_j}(0)}x\log x)+ \beta_{q_j}
x\big)
\mapsto C_p\alpha + C_p'\beta
\end{equation}
where $\alpha$ is the column with entries $\alpha_{q_j}$, similarly
$\beta$,
and
\begin{equation*}
C_p=\begin{pmatrix}
c_{1,1} & \cdots & c_{1,k_p} \\
\vdots & \ddots & \vdots \\
c_{k_p,1} & \cdots & c_{k_p,k_p}
\end{pmatrix},\quad 
C_p'=\begin{pmatrix}
c_{1,1}' & \cdots & c_{1,k_p}' \\
\vdots & \ddots & \vdots \\
c_{k_p,1}' & \cdots & c_{k_p,k_p}'
\end{pmatrix}.
\end{equation*}
We shall identify the linear map $\gamma_p$ with the $(k_p \times
2k_p)$-matrix
\begin{equation}\label{MatrixVertex}
\gamma_p=
\begin{pmatrix}
C_p & C_p'
\end{pmatrix}
\end{equation}
and regard two such matrices as equivalent if one is obtained from 
the other by multiplication on the left by an invertible 
$(k_p\times k_p)$-matrix. For any positive integer $k$ we let 
$V_{k,2k}(\C)$ be the set of $k\times 2k$ complex matrices with maximal rank, and write $G_{k,2k}(\C)$ for the quotient of $V_{k,2k}(\C)$ by the standard action of $\mathrm{GL}(k,\C)$ on the left. Thus fixing a defining function $x$ for $\partial G$ and the bases of the various $\Sing_q(A)$ as indicated above, establishes specific isomorphisms $\Gr_{k_p}(\Sing_{p}(A))\to G_{k_p,2k_p}(\C)$.

\section{The model operator}\label{Sec-ModelOperator}
We continue our discussion with a fixed defining function $x$ for
$\partial G$. With respect to this function, the operator $A$ has the form
\eqref{AtAnEndpoint} at $q\in \partial G$.

\begin{definition}
The model operator of $A$ at $p\in V$ is defined to be the diagonal
operator 
\begin{equation}\label{ApwedgeMatrix}
A_{p,\wedge}=\bigoplus_{q\in p} a_q(0) D_x^2:\bigoplus_{q\in p}
C_c^\infty(\R_+)\to \bigoplus_{q\in p}
C_c^\infty(\R_+).
\end{equation}
Since $A$ is elliptic we have $a_q(0) \neq 0$.
\end{definition}

The canonical domains of $A_{p,\wedge}$ are 
\begin{equation*}
\Dom_{\min}(A_{p,\wedge}) = \bigoplus_{q\in p} H^2_0(\overline\R_+),
\;\quad\;
\Dom_{\max}(A_{p,\wedge}) = \bigoplus_{q\in p} H^2(\R_+),
\end{equation*}
and, as in Section~\ref{s-DomainsAndNetworks},
\begin{equation*}
\Dom_{\max}(A_{p,\wedge})/\Dom_{\min}(A_{p,\wedge}) \cong
\Sing_p(A_\wedge)
\end{equation*}
where
\begin{equation*}
\Sing_p(A_{\wedge})=\bigoplus_{q\in p}\set{\alpha_q + \beta_q
x:\alpha_q,\ \beta_q\in \C}.
\end{equation*}
This space and the space $\Sing_p(A)$ in
Section~\ref{s-DomainsAndNetworks} are linked by the map 
\begin{equation}\label{thetap}
\begin{gathered}
\theta_p: \Sing_p(A) \to \Sing_p(A_{\wedge}), \\ 
\theta_p \Big(\bigoplus_{q\in p}
\big(\alpha_q(1+\tfrac{c_q(0)}{a_q(0)}x\log x) + \beta_q x\big)\Big) 
=\bigoplus_{q\in p}(\alpha_q + \beta_q x).
\end{gathered}
\end{equation}
This isomorphism, which allows us to relate domains for $A$ with domains
for $A_{p,\wedge}$, will be a key component in the proof of 
Theorem~\ref{SectorMinGrowthGraph}. 

An admissible domain for $A_{p,\wedge}$ is a subspace of 
$\Dom_{\max}(A_{p,\wedge})$ given by 
\begin{equation*}
\Dom_{\gamma_p}(A_{p,\wedge}) =
\set{u\in
\Dom_{\max}(A_{p,\wedge}):\gamma_p(u/\Dom_{\min}(A_{p,\wedge}))=0},
\end{equation*}
where $\gamma_p:\Sing_p(A_{\wedge})\to \C^{k_p}$, $k_p=\#p$, is a 
surjective map.  

Let
\begin{equation}\label{bgreswedge}
\begin{aligned}
\bgres(A_{p,\wedge}) = \bigl\{\lambda \in \C \st A_{p,\wedge}-\lambda\; 
&\textup{ is injective on } \Dom_{\min}(A_{p,\wedge})\\
&\textup{ and surjective on } \Dom_{\max}(A_{p,\wedge})\bigr\},
\end{aligned}
\end{equation}
the \emph{background resolvent set} of $A_{p,\wedge}$, and let the
\emph{background spectrum} $\bgspec(A_{p,\wedge})$ be the complement
of $\bgres(A_{p,\wedge})$ in $\C$. The relevance of the background
spectrum lies in the fact that
$$
\bgspec(A_{p,\wedge}) = \bigcap_{\Dom_{\min} \subset \Dom \subset
\Dom_{\max}}\spec\bigl(A_{p,\wedge}\big|_{\Dom}\bigr),
$$
i.e., it is the part of the spectrum common to all extensions.

\begin{lemma}\label{bgspecwedge}
We have
$$
\bgspec(A_{p,\wedge}) = \bigcup_{q\in p} a_q(0)\cdot\overline{\R}_+.
$$
\end{lemma}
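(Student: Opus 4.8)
The plan is to reduce the statement to an elementary computation for a single second order operator on the half-line. Since $A_{p,\wedge}=\bigoplus_{q\in p}a_q(0)D_x^2$, and the minimal and maximal domains split accordingly, $A_{p,\wedge}-\lambda$ is injective on $\Dom_{\min}(A_{p,\wedge})$ (resp.\ surjective on $\Dom_{\max}(A_{p,\wedge})$) if and only if each summand $a_q(0)D_x^2-\lambda$ is injective on $H_0^2(\overline{\R}_+)$ (resp.\ surjective on $H^2(\R_+)$). Hence $\bgres(A_{p,\wedge})=\bigcap_{q\in p}\bgres\bigl(a_q(0)D_x^2\bigr)$, so it is enough to prove, for a single $a\in\C\minus\set{0}$, that $\bgspec(aD_x^2)=a\cdot\overline{\R}_+$. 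Writing $\mu=\lambda/a$, so that $\lambda\in a\overline{\R}_+$ precisely when $\mu\in\overline{\R}_+=[0,\infty)$, this reduces to showing that $D_x^2-\mu$ is injective on $H_0^2(\overline{\R}_+)$ and surjective on $H^2(\R_+)$ if and only if $\mu\notin[0,\infty)$.

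I would first observe that injectivity on the minimal domain holds for \emph{every} $\mu\in\C$, so that $\bgspec(aD_x^2)$ is governed entirely by surjectivity on $\Dom_{\max}$. Indeed, the space of $L^2(\R_+)$ solutions of $-u''-\mu u=0$ is one-dimensional, spanned by $e^{-\tau x}$ with $\tau=\sqrt{-\mu}$ and $\mathrm{Re}\,\tau>0$, when $\mu\notin[0,\infty)$, and is trivial when $\mu\in[0,\infty)$ (the solutions are then spanned by $1,x$ if $\mu=0$ or by $e^{\pm i\sqrt\mu\,x}$ if $\mu>0$, none of which lie in $L^2(\R_+)$). Since $e^{-\tau x}$ does not vanish at $x=0$, it is not an element of $H_0^2(\overline{\R}_+)$; hence $\ker(D_x^2-\mu)\cap H_0^2(\overline{\R}_+)=0$ in all cases.

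For $\mu\notin[0,\infty)$ I would then verify surjectivity of $D_x^2-\mu$ on $H^2(\R_+)$. The quickest argument is that the Dirichlet realization of $D_x^2$ on $\R_+$ is selfadjoint with spectrum $[0,\infty)$, so $D_x^2-\mu$ restricts to a bijection from the Dirichlet domain $\set{u\in H^2(\R_+)\st u(0)=0}$ onto $L^2(\R_+)$; since that domain is contained in $H^2(\R_+)=\Dom_{\max}$, surjectivity on $\Dom_{\max}$ follows a fortiori. (Alternatively, one solves $-u''-\mu u=f$ explicitly by variation of parameters from $e^{-\tau x}$ and $e^{\tau x}$ and estimates the solution in $L^2(\R_+)$ by a convolution inequality, using $\mathrm{Re}\,\tau>0$.) Thus every $\lambda\notin a\overline{\R}_+$ belongs to $\bgres(aD_x^2)$.

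The crux, and the step needing the most care, is that surjectivity on $H^2(\R_+)$ fails when $\mu\in[0,\infty)$: the obstruction here is not the finite dimensional kernel but the failure of the range of $D_x^2-\mu$ to be closed. Since $D_x^2-\mu$ is injective on $H^2(\R_+)$ (as noted above), the open mapping theorem shows that surjectivity would force an estimate $\|u\|_{H^2(\R_+)}\le C\,\|(D_x^2-\mu)u\|_{L^2(\R_+)}$, and I would violate this with a Weyl sequence. Let $v$ be the bounded solution of $-v''-\mu v=0$, i.e.\ $v\equiv 1$ if $\mu=0$ and $v=e^{i\sqrt\mu\,x}$ if $\mu>0$, and fix $\chi\in C_c^\infty(\R)$ with $\chi\equiv 1$ on $[0,1]$. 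Then $u_n(x)=\chi(x/n)v(x)$ lies in $C_c^\infty([0,\infty))\subset H^2(\R_+)$, and since $v''+\mu v=0$ one computes $(D_x^2-\mu)u_n=-\tfrac1{n^2}\chi''(\cdot/n)\,v-\tfrac2n\chi'(\cdot/n)\,v'$, whose $L^2(\R_+)$ norm is $O(n^{-3/2})+O(n^{-1/2})\to 0$ because the derivatives of $\chi(\cdot/n)$ are supported in a set of length $O(n)$. On the other hand $|v|\equiv 1$, so $\|u_n\|_{L^2(\R_+)}\ge\|v\|_{L^2(0,n)}=\sqrt n\to\infty$. This contradicts the estimate, so $D_x^2-\mu$ is not surjective on $H^2(\R_+)$, and hence $\lambda=a\mu\in\bgspec(aD_x^2)$. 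Combining the three steps gives $\bgspec(aD_x^2)=a\cdot\overline{\R}_+$, and therefore $\bgspec(A_{p,\wedge})=\bigcup_{q\in p}a_q(0)\cdot\overline{\R}_+$.
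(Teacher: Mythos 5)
Your proof is correct, and for half of the argument it coincides with the paper's: the reduction to a single operator $aD_x^2$ on the half-line (the paper normalizes to $k_p=1$, $a_q(0)=1$; you rescale $\mu=\lambda/a$) and the inclusion $\bgspec(aD_x^2)\subset a\cdot\overline{\R}_+$ via the selfadjoint, nonnegative Dirichlet realization are the same. Where you genuinely diverge is in the key inclusion $a\cdot\overline{\R}_+\subset\bgspec(aD_x^2)$, i.e.\ the failure of surjectivity of $D_x^2-\mu$ on $H^2(\R_+)$ for $\mu\geq 0$. The paper factors $D_x^2-\mu^2=(D_x+\mu)(D_x-\mu)$, conjugates away the zero-order term, and reduces the question to the non-surjectivity of $D_x:H^1(\R_+)\to L^2(\R_+)$, which is settled by exhibiting an explicit $L^2$ function (equal to $i/x$ for large $x$) with no $L^2$ antiderivative. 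You instead note that $D_x^2-\mu$ is injective on the maximal domain for $\mu\geq 0$, so surjectivity would yield an a priori estimate by the open mapping theorem, and you contradict it with the quasimode sequence $u_n=\chi(\cdot/n)v$ built from the bounded solution $v$; this is a standard Weyl-sequence mechanism and additionally identifies the obstruction as non-closedness of the range. Both routes are complete; the paper's is a more hands-on ODE reduction to a first-order operator, while yours is a slightly longer but self-contained functional-analytic argument that generalizes readily (e.g.\ to other constant-coefficient model operators where an explicit factorization is less convenient).
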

\begin{proof}
Without loss of generality we may assume $k_p=1$ and $a_q(0)=1$, thus 
$A_{p,\wedge}=D_x^2$. The extension of $A_{p,\wedge}$ with 
domain $H^2(\R_+)\cap H^1_0(\overline{\R}_+)$ (the Dirichlet extension)
is selfadjoint and 
positive, so it is clear that the background spectrum of $A_{p,\wedge}$ 
is contained in $\overline{\R}_+$. On the other hand, we also have 
$\overline{\R}_+\subset\bgspec(A_{p,\wedge})$ since the operator
$$
A_{p,\wedge} - \lambda : \Dom_{\max}(A_{p,\wedge}) \to L^2(\R_+)
$$
is not surjective for every $\lambda \geq 0$.
To see this write $\lambda = \mu^2$ with $\mu \geq 0$, and
$$
A_{p,\wedge} - \lambda = \bigl(D_x + \mu\bigr)\bigl(D_x - \mu\bigr).
$$
If $A_{p,\wedge} - \lambda$ was surjective, then also $D_x + \mu =
e^{-ix\mu}D_xe^{ix\mu}: H^1(\R_+) \to L^2(\R_+)$
would be surjective. Consequently, also $D_x$ would be surjective
contradicting the fact that any function $\varphi \in
L^2(\R_+)$ with $\varphi(x) = i/x$ for large values of $x$ does not have an
antiderivative in $L^2$.
\end{proof}

For $\lambda \in \bgres(A_{p,\wedge})$ we let
$$
\K_{p,\wedge}(\lambda) = 
\ker\bigl((A_{p,\wedge}-\lambda)\big|_{\Dom_{\max}}\bigr) =
\bigoplus_{q\in p}\Bigl\{\alpha_q\,e^{-\sqrt{-\lambda/a_q(0)}\;x}\st
\alpha_q \in \C\Bigr\},
$$
where the roots $\sqrt{-\lambda/a_q(0)}$ are chosen such that 
\begin{equation}\label{RootsChoice} 
  \Re \Big(\sqrt{-\lambda/a_q(0)}\Big)>0. 
\end{equation}
Modulo $H^2_0([0,\eps))$, we have
$$
\alpha_q\,e^{-\sqrt{-\lambda/a_q(0)}\;x} \sim 
\alpha_q\,\Bigl(1 - \sqrt{-\lambda/a_q(0)}\;x\Bigr)
\;\text{ as } x \to 0,
$$
so
\begin{multline}\label{KernelAsymptotics}
\bigl(\K_{p,\wedge}(\lambda) + \Dom_{\min}(A_{p,\wedge})\bigr)
/\Dom_{\min}(A_{p,\wedge}) \\
\cong \bigoplus_{q\in p}\Bigl\{\alpha_q\,
\Bigl(1 - \sqrt{-\lambda/a_q(0)}\;x\Bigr)\st \alpha_q \in\C\Bigr\}.
\end{multline}

We need the following simple algebraic lemma.
\begin{lemma}\label{AlgebraicIntermediate}
Let $\Dom_{\min}$, $\Dom_{\max}$, and $H$ be vector spaces, 
$\Dom_{\min} \subset \Dom_{\max}$. Let
$\mathcal A:\Dom_{\max}\to H$ be a surjective linear map that is 
injective on $\Dom_{\min}$. 
Let $\Dom$ be any intermediate space $\Dom_{\min}\subset \Dom \subset 
\Dom_{\max}$.  Then $\mathcal A: \Dom \to H$ is bijective if and only 
if $\Dom_{\max}/\Dom_{\min} = \Dom/\Dom_{\min} \oplus 
\bigl(K+\Dom_{\min}\bigr)/\Dom_{\min}$, where $K$ is the
kernel of $\mathcal A$ on $\Dom_{\max}$.
\end{lemma}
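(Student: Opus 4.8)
The plan is to split the biconditional into its surjectivity and injectivity halves, handle each by elementary linear algebra, and then recombine. Throughout, for a subspace $X$ with $\Dom_{\min}\subseteq X\subseteq\Dom_{\max}$ write $\bar X=X/\Dom_{\min}$, and put $\bar K=(K+\Dom_{\min})/\Dom_{\min}\subseteq\bar\Dom_{\max}$; the goal is to show that $\mathcal A\colon\Dom\to H$ is bijective precisely when $\bar\Dom_{\max}=\bar\Dom\oplus\bar K$.

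For the surjectivity half I would argue that $\mathcal A|_{\Dom}$ is onto $H$ if and only if $\Dom+K=\Dom_{\max}$. One direction is immediate; for the other, given $h\in H$ pick, using surjectivity of $\mathcal A$ on $\Dom_{\max}$, some $u\in\Dom_{\max}$ with $\mathcal Au=h$. Then $h\in\rg(\mathcal A|_{\Dom})$ if and only if $u$ differs from an element of $\Dom$ by something in $K$, i.e. $u\in\Dom+K$; so $\rg(\mathcal A|_{\Dom})=H$ forces $\Dom_{\max}\subseteq\Dom+K$. Since $\Dom_{\min}\subseteq\Dom$, the subspace $\Dom+K$ contains $\Dom_{\min}$ and $(\Dom+K)/\Dom_{\min}=\bar\Dom+\bar K$, so $\Dom+K=\Dom_{\max}$ is equivalent to $\bar\Dom+\bar K=\bar\Dom_{\max}$.

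For the injectivity half, $\mathcal A|_{\Dom}$ is injective exactly when $\Dom\cap K=\set0$, and I would show this is equivalent to $\bar\Dom\cap\bar K=\set0$. The natural map $\Dom\cap K\to\bar\Dom\cap\bar K$ sending $k$ to $k+\Dom_{\min}$ is surjective: any element of $\bar\Dom\cap\bar K$ is a coset $v+\Dom_{\min}$ with $v\in\Dom$ and $v-k\in\Dom_{\min}$ for some $k\in K$, whence $k=v-(v-k)\in\Dom$ and the coset equals $k+\Dom_{\min}$ with $k\in\Dom\cap K$. It is injective because if $k\in\Dom\cap K$ maps to $0$ then $k\in\Dom_{\min}\cap K$, and since $\mathcal A$ is injective on $\Dom_{\min}$ and $\mathcal Ak=0$ we get $k=0$; this is the one and only place the hypothesis that $\mathcal A$ is injective on $\Dom_{\min}$ is used. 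Hence $\Dom\cap K=\set0$ if and only if $\bar\Dom\cap\bar K=\set0$.

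Combining the two halves, $\mathcal A|_{\Dom}$ is bijective iff $\bar\Dom+\bar K=\bar\Dom_{\max}$ and $\bar\Dom\cap\bar K=\set0$, which is precisely the asserted direct sum decomposition. I do not anticipate any genuine obstacle — everything is formal — the only point deserving a little care being the passage between $\Dom\cap K=\set0$ and $\bar\Dom\cap\bar K=\set0$, which could conceivably fail if quotienting by $\Dom_{\min}$ collapsed the intersection, but does not here precisely because $K\cap\Dom_{\min}=\set0$.
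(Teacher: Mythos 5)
Your proof is correct. The paper itself states this as a ``simple algebraic lemma'' and offers no proof, and your argument is precisely the routine verification the authors leave to the reader: surjectivity of $\mathcal A|_{\Dom}$ is equivalent to $\Dom+K=\Dom_{\max}$, injectivity to $\Dom\cap K=\set{0}$, and both conditions pass faithfully to the quotient by $\Dom_{\min}$, with the hypothesis that $\mathcal A$ is injective on $\Dom_{\min}$ entering exactly where you say it does, namely to guarantee $K\cap\Dom_{\min}=\set{0}$ so that $\Dom\cap K\to(\Dom/\Dom_{\min})\cap\bigl((K+\Dom_{\min})/\Dom_{\min}\bigr)$ is an isomorphism.
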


Let $\Dom_{\gamma_p}(A_{p,\wedge})$ be an admissible domain. 
Using the lemma with $\mathcal A=A_{p,\wedge}-\lambda$,
and since 
\[ \dim \Dom_{\gamma_p}(A_{p,\wedge})/\Dom_{\min}(A_{p,\wedge})=
   \dim \bigl(\K_{p,\wedge}(\lambda) + \Dom_{\min}(A_{p,\wedge})\bigr)
   /\Dom_{\min}(A_{p,\wedge}) = k_p, \]
we conclude that 
\begin{gather} \notag
\lambda\in \spec\bigl(A_{p,\wedge}\big|_{\Dom_{\gamma_p}}\bigr)
   \;\text{ if and only if} \\ \label{IntersecCond}
\Dom_{\gamma_p}(A_{p,\wedge})/\Dom_{\min}(A_{p,\wedge}) \cap
\bigl(\K_{p,\wedge}(\lambda) + \Dom_{\min}(A_{p,\wedge})\bigr)/
\Dom_{\min}(A_{p,\wedge}) \neq \{0\}.
\end{gather}
Note that $\dim \Dom_{\max}(A_{p,\wedge})/\Dom_{\min}(A_{p,\wedge})=2k_p$.

Let $p=\set{q_1,\dotsc,q_{k_p}}$. 
As in Section~\ref{s-DomainsAndNetworks} we shall identify $\gamma_p$ with 
a matrix $(C_p\;\, C_p') \in V_{k_p,2k_p}(\C)$, and specifying a domain 
$\Dom_{\gamma_p}(A_{p,\wedge})$ is equivalent to specifying an equivalence 
class $[\gamma_p] \in G_{k_p,2k_p}(\C)$.

To simplify the notation we will write $a_j(0)$ instead of $a_{q_j}(0)$. 

\begin{proposition}\label{SpectrumonWedge}
Let $\lambda \in \bgres(A_{p,\wedge})$ and let
$\gamma_p = (C_p \;\, C_p')$ be an admissible 
coupling condition at $p$.  Let $\Delta(\lambda)$ be the diagonal 
matrix with entries $\sqrt{-\lambda/a_j(0)}$, $j=1,\dots,k_p$, 
chosen as in \eqref{RootsChoice}. Then 
\begin{equation}\label{AwedgeDet}
\lambda \in \spec\bigl(A_{p,\wedge}\big|_{\Dom_{\gamma_p}}\bigr)
\;\text{ if and only if }\;\;
\det \bigl(C_p-C_p'\Delta(\lambda)\bigr)=0.
\end{equation}
\end{proposition}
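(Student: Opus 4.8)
The plan is to translate the abstract intersection criterion \eqref{IntersecCond} into the concrete matrix condition \eqref{AwedgeDet} by writing down explicit bases of the two subspaces of $\Dom_{\max}(A_{p,\wedge})/\Dom_{\min}(A_{p,\wedge})$ appearing there and computing when they intersect nontrivially. First I would fix the basis of $\Sing_p(A_\wedge) \cong \Dom_{\max}(A_{p,\wedge})/\Dom_{\min}(A_{p,\wedge})$ given by the pairs $(1,0)$ and $(0,x)$ in each summand, i.e. identify an element $\bigoplus_{q\in p}(\alpha_q + \beta_q x)$ with the column vector $(\alpha,\beta)^T \in \C^{2k_p}$. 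In these coordinates the domain $\Dom_{\gamma_p}(A_{p,\wedge})/\Dom_{\min}$ is exactly $\ker(C_p\ \, C_p') \subset \C^{2k_p}$.

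Next I would use \eqref{KernelAsymptotics}: the subspace $\bigl(\K_{p,\wedge}(\lambda)+\Dom_{\min}\bigr)/\Dom_{\min}$ is spanned by the vectors corresponding to $\alpha_q(1 - \sqrt{-\lambda/a_q(0)}\,x)$, so in the $(\alpha,\beta)$ coordinates it is the column span of the $2k_p\times k_p$ matrix $\begin{pmatrix} I \\ -\Delta(\lambda) \end{pmatrix}$, where $\Delta(\lambda)$ is the stated diagonal matrix; here I use that $\lambda\in\bgres(A_{p,\wedge})$ so that this space genuinely has dimension $k_p$ (the roots are well defined with positive real part, so each exponential is a nonzero element of $\Dom_{\max}$ that is not in $\Dom_{\min}$). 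By \eqref{IntersecCond}, $\lambda\in\spec(A_{p,\wedge}|_{\Dom_{\gamma_p}})$ iff there is a nonzero vector in the intersection, i.e. iff there exists $0\ne v\in\C^{k_p}$ with $\begin{pmatrix} v \\ -\Delta(\lambda)v\end{pmatrix}\in\ker(C_p\ \, C_p')$. Expanding the kernel condition gives $C_p v - C_p'\Delta(\lambda)v = 0$, so a nonzero such $v$ exists precisely when $\det\bigl(C_p - C_p'\Delta(\lambda)\bigr)=0$, which is \eqref{AwedgeDet}.

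The only point requiring a little care — and what I expect to be the main (mild) obstacle — is bookkeeping the two natural identifications consistently: the asymptotic expansion $e^{-\sqrt{-\lambda/a_q(0)}\,x}\sim 1-\sqrt{-\lambda/a_q(0)}\,x$ is taken modulo $H^2_0([0,\eps))=\Dom_{\min}$, and one must check that the map $u\mapsto u/\Dom_{\min}$ restricted to $\K_{p,\wedge}(\lambda)$ is injective so that the span really is $k_p$-dimensional and the matrix $\begin{pmatrix} I\\ -\Delta(\lambda)\end{pmatrix}$ has full column rank. This injectivity is exactly the statement that $A_{p,\wedge}-\lambda$ is injective on $\Dom_{\min}$ together with the fact that no nonzero element of $\K_{p,\wedge}(\lambda)$ already lies in $\Dom_{\min}$, both of which hold for $\lambda\in\bgres(A_{p,\wedge})$. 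Once these identifications are pinned down the determinant computation is immediate from elementary linear algebra, and the equivalence-class ambiguity $[\gamma_p]\in G_{k_p,2k_p}(\C)$ does not affect the statement since replacing $(C_p\ \, C_p')$ by $(GC_p\ \, GC_p')$ with $G\in\mathrm{GL}(k_p,\C)$ only multiplies the determinant by $\det G\ne 0$.
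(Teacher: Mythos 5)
Your argument is correct and follows essentially the same route as the paper: both translate the intersection criterion \eqref{IntersecCond} into coordinates using the asymptotics \eqref{KernelAsymptotics}, so that membership of $\oplus_j\alpha_j(1-\sqrt{-\lambda/a_j(0)}\,x)$ in $\Dom_{\gamma_p}/\Dom_{\min}$ becomes $(C_p-C_p'\Delta(\lambda))\alpha=0$, which has a nontrivial solution iff the determinant vanishes. Your extra remarks on the rank of the kernel space and the $\mathrm{GL}(k_p,\C)$ ambiguity are fine but already implicit in the paper's dimension count preceding \eqref{IntersecCond}.
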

In other words, $\spec\bigl(A_{p,\wedge}\big|_{\Dom_{\gamma_p}}\bigr)$ 
consists of the background spectrum and the 
solutions of the determinant equation in \eqref{AwedgeDet}.
\begin{proof}
Let $u=\oplus_{j=1}^{k_p}\alpha_j\big(1 - \sqrt{-\lambda/a_j(0)}\;x\big) 
\in \bigl(\K_{p,\wedge}(\lambda) + \Dom_{\min}(A_{p,\wedge})\bigr)/
\Dom_{\min}(A_{p,\wedge})$. Then $u \in \Dom_{\gamma_p}(A_{p,\wedge})/
\Dom_{\min}(A_{p,\wedge})$ if and only if
\begin{equation*}
\begin{pmatrix} C_p & C_p' \end{pmatrix}
\begin{pmatrix} I \\ -\Delta(\lambda) \end{pmatrix} \alpha =
\bigl(C_p-C_p'\Delta(\lambda)\bigr)\alpha = 0,
\end{equation*}
where $\alpha$ is the column vector with entries $\alpha_j$.
Therefore, \eqref{IntersecCond} is satisfied if and only if
$C_p-C_p'\Delta(\lambda)$ is not invertible.
\end{proof}

\begin{example}[$\delta$-type conditions, cf. \cite{Kuchment2}]
\label{deltaConditions}
Consider the condition given by the $(k_p\times2k_p)$-matrix (consisting 
of two square blocks)
\begin{equation}\label{deltaConditionsMatrix}
\gamma_p = \left(
\begin{array}{crrrrc|cccccc}
1 &-1 & 0 & \cdots & 0 & 0 & 0 & 0 & 0 &\cdots & 0 &0\\
0 & 1 &-1 & & 0 & 0 & 0 & 0 & 0 &\cdots & 0 &0\\[1ex]
\vdots & & \hspace*{1ex}\ddots \hspace*{-1ex} & 
\hspace*{2ex}\ddots \hspace*{-2ex}& 
& \vdots & \vdots & \vdots & \vdots & & \vdots & \vdots\\[2ex]
0 & 0 & 0 & & 1 & -1\; & 0 & 0 & 0 &\cdots & 0 &0\\
\hspace*{-1ex} \nu_p \hspace*{-1ex}& 0 & 0 & \cdots & 0 & 0 
& \;\; c_1' & c_2' & c_3' &\cdots & 
c_{k_p-1}'\hspace*{-1ex} & c_{k_p}' \hspace*{-1ex}
\end{array}
\right),
\end{equation}
where $(\nu_p,c_1',\dots,c_{k_p}')\in \C^{k_p+1}\minus\{0\}$.  
Let $A_{p,\wedge}$ be as in \eqref{ApwedgeMatrix}.
By Proposition~\ref{SpectrumonWedge}, $\lambda\in\bgres(A_{p,\wedge})$
belongs to the spectrum of $A_{p,\wedge}$ with domain $\Dom_{\gamma_p}$ 
if and only if 
\begin{equation*}
\det\left(
\begin{array}{ccccc}
1 & \hspace*{-1.5ex} -1 & 0 & \cdots & 0 \\
0 & 1 & \hspace*{-1.5ex} -1 & \cdots & 0 \\
0 & 0 &  1 & & 0 \\
\vdots & \vdots & & \ddots & \\
0 & 0 & 0 & & \hspace*{-1.5ex} -1 \\
\hspace*{-1ex} \nu_p - c_1'\sqrt{\frac{-\lambda}{a_1(0)}} 
& -c_2'\sqrt{\frac{-\lambda}{a_2(0)}} & -c_3'\sqrt{\frac{-\lambda}{a_3(0)}}
& \cdots & -c_{k_p}'\sqrt{\frac{-\lambda}{a_{k_p}(0)}} 
\end{array}
\right)
=0,
\end{equation*}
which is equivalent to
\begin{equation}\label{EigenwertEquation}
\sum_{j=1}^{k_p} c_j' \sqrt{-\lambda/a_j(0)} = \nu_p. 
\end{equation}

In the case of Kirchhoff boundary conditions $\nu_p=0$ and
$c_1'=\cdots=c_{k_p}'=1$, the equation \eqref{EigenwertEquation} 
has no solution and we get
\[ \spec(A_{p,\wedge}\big|_{\Dom_{\gamma_p}}) = \bgspec(A_{p,\wedge})
  =\bigcup_{j=1}^{k_p} a_j(0)\cdot\overline{\R}_+. \]

Now let $c_1',\dots,c_{k_p}'$ be arbitrary complex numbers.
Given an open sector $\Lambda_0$ in $\bgres(A_{p,\wedge})$ we let
$w:-\Lambda_0\to\C$ be a holomorphic square root and let the
roots $\sqrt{a_j(0)}$ be chosen such that 
\[ \sqrt{-\lambda/a_j(0)}=\frac{w(-\lambda)}{\sqrt{a_j(0)}}
   \;\;\text{ for } \lambda\in\Lambda_0, \] 
where $\sqrt{-\lambda/a_j(0)}$ is as above the square root with 
positive real part.  Then, over $\Lambda_0$, the equation 
\eqref{EigenwertEquation} can be written as 
\[ w(-\lambda)\sum_{j=1}^{k_p}\frac{c_j'}{\sqrt{a_j(0)}} = \nu_p, \]
and there are three possible outcomes: 
\begin{enumerate}[\quad $(a)$]
\item The equation has no solution, which implies
$$
\Lambda_0 \cap \spec\bigl(A_{p,\wedge}\big|_{\Dom_{\gamma_p}}\bigr) 
= \varnothing.
$$
\item The equation has the unique solution 
$$
\lambda_p = -\nu^2_p/ \Bigl(\sum_{j=1}^{k_p}
\frac{c_j'}{\sqrt{a_j(0)}}\Bigr)^2,
$$
in which case
$$
\Lambda_0 \cap \spec\bigl(A_{p,\wedge}\big|_{\Dom_{\gamma_p}}\bigr)
= \{\lambda_p\}.
$$
\item Every $\lambda\in\Lambda_0$ solves the equation, and so
$\Lambda_0 \subset \spec\bigl(A_{p,\wedge}\big|_{\Dom_{\gamma_p}}\bigr)$.
Note that this can occur only for $\nu_p = 0$.
\end{enumerate}
\end{example}

Clearly, the choice of $\nu_p, c_1',\dots,c_{k_p}'$ in the coupling 
condition $\gamma_p$ determines the spectral behavior of $A_{p,\wedge}$ 
with domain $\Dom_{\gamma_p}$.  In order to illustrate the subtlety of 
the spectrum (even in this simple example), let's consider the special
case $\nu_p=0$.   

Note that if $A_{p,\wedge}$ has $n_p$ coefficients with distinct 
arguments, then the background resolvent set is a disjoint union of 
$n_p$ open sectors 
\begin{equation}\label{bgresSplitting}
 \bgres(A_{p,\wedge}) = \bigcup_{j=1}^{n_p} \Lambda_j.
\end{equation}
By the above discussion, if $\nu_p=0$, then for each $j$
\[
\text{either }\; \Lambda_j \cap 
\spec\bigl(A_{p,\wedge}\big|_{\Dom_{\gamma_p}}\bigr) = \varnothing
\;\;\text{ or }\;\;
\Lambda_j \subset \spec\bigl(A_{p,\wedge}\big|_{\Dom_{\gamma_p}}\bigr).
\]
In fact, depending on the choice of $c_1',\dots,c_{k_p}'$, any 
collection of these sectors may or may not be in the spectrum of 
$A_{p,\wedge}$ with domain $\Dom_{\gamma_p}$.

Let $a_j^0=e^{i\varphi_j}$, $j=1,\dots,n_p$, be an enumeration of 
the elements of the set
$\big\{a_\ell(0)/|a_\ell(0)|\st \ell=1,\dots,k_p\big\}$, ordered 
in such a way that $0\le \varphi_1<\cdots<\varphi_{n_p}<2\pi$. Let 
\begin{equation}\label{bgresSectors}
\begin{gathered}
\Lambda_j= \big\{re^{i\varphi}\st r>0 \text{ and }
  \varphi_j<\varphi<\varphi_{j+1}\big\},\;\; 1\le j \le n_p-1, \\
\Lambda_{n_p}= \big\{re^{i\varphi}\st r>0 \text{ and }
  \varphi_{n_p}<\varphi<\varphi_1+2\pi\big\}.
\end{gathered}
\end{equation}
These are the components of $\bgres(A_{p,\wedge})$. 
For $\lambda\in\bgres(A_{p,\wedge})$ we choose $\sqrt{-\lambda/a_j^0}$ 
such that $\Re\big(\sqrt{-\lambda/a_j^0}\big)>0$ for all $j$.
Fix $\sqrt{a_1^0}$ arbitrarily.  The function $\lambda\mapsto 
\sqrt{a_1^0}\sqrt{-\lambda/a_1^0}$ is holomorphic on 
$\C\minus(a_1^0\cdot \overline{\R}_+)$.  Choose
$\sqrt{a_2^0},\dots,\sqrt{a_{n_p}^0}$ such that
\begin{equation*}
\sqrt{-\lambda/a_j^0} = \frac{\sqrt{a_1^0}\sqrt{-\lambda/a_1^0}}
{\sqrt{a_j^0}}\quad \text{on }\Lambda_j.
\end{equation*} 
Let $\Omega_{1,j}=\C\minus\big[(a_1^0\cdot \overline{\R}_+) \cup 
(a_j^0\cdot \overline{\R}_+)\big]$ and let 
$\epsilon_j:\Omega_{1,j} \to \{\pm1\}$ be defined by
\begin{equation}\label{SignFunction}  
 \epsilon_j(\lambda)= \frac{\sqrt{a_j^0}\sqrt{-\lambda/a_j^0}}
 {\sqrt{a_1^0}\sqrt{-\lambda/a_1^0}}. 
\end{equation}
Note that $\epsilon_j(\lambda)=1$ for $\lambda\in\Lambda_j$.

\begin{lemma}\label{EpsilonValues}
The functions $\epsilon_j$ satisfy 
\begin{equation*}
\epsilon_j(\Lambda_\ell)=
\begin{cases}
-1 &\text{if } 1\le\ell< j, \\
\hspace*{1.5ex} 1 &\text{if } j\le\ell\le n_p.
\end{cases}
\end{equation*} 
\end{lemma}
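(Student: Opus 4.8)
The plan is to use that $\epsilon_j$, defined in \eqref{SignFunction}, is holomorphic on $\Omega_{1,j}$ with values in the discrete set $\{\pm1\}$, hence constant on each connected component, and then to identify the value on each component. Indeed, on $\Omega_{1,j}$ both $\sqrt{-\lambda/a_1^0}$ and $\sqrt{-\lambda/a_j^0}$ are holomorphic and nonvanishing, since the branch cuts of these two square roots are precisely the two rays removed from $\C$, and $\epsilon_j(\lambda)^2 = \frac{a_j^0(-\lambda/a_j^0)}{a_1^0(-\lambda/a_1^0)} = 1$ identically; a holomorphic square root of the constant $1$ is locally constant. We may assume $j\ne 1$, the case $j=1$ being trivial since then $\epsilon_1\equiv 1$. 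Then $a_1^0\cdot\overline{\R}_+$ and $a_j^0\cdot\overline{\R}_+$ are distinct rays from the origin, so $\Omega_{1,j}$ has exactly two components, the open sectors $\Omega^+=\set{re^{i\varphi}\st r>0,\ \varphi_1<\varphi<\varphi_j}$ and $\Omega^-=\set{re^{i\varphi}\st r>0,\ \varphi_j<\varphi<\varphi_1+2\pi}$. From the ordering $\varphi_1<\dots<\varphi_{n_p}$ and the description \eqref{bgresSectors} one reads off that $\Lambda_\ell\subset\Omega^+$ for $1\le\ell<j$ and $\Lambda_\ell\subset\Omega^-$ for $j\le\ell\le n_p$, so the lemma reduces to showing that $\epsilon_j\equiv1$ on $\Omega^-$ and $\epsilon_j\equiv-1$ on $\Omega^+$.

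The first equality is built in: the roots $\sqrt{a_j^0}$ were chosen so that $\epsilon_j\equiv1$ on $\Lambda_j$, and $\Lambda_j\subset\Omega^-$, so $\epsilon_j\equiv1$ on all of $\Omega^-$. For the second I would compare the boundary values of $\epsilon_j$ on the two faces of the ray $a_j^0\cdot\overline{\R}_+$. Writing $-\lambda/a_j^0=\rho e^{i\psi}$, the square root with positive real part is $\sqrt{\rho}\,e^{i\psi/2}$ for $\psi\in(-\pi,\pi)$, with boundary values $\pm i\sqrt{\rho}$ as $\psi\to\pm\pi$ — opposite in sign and nonzero — whereas $\sqrt{-\lambda/a_1^0}$ continues holomorphically and without zeros across this ray (away from the origin), its own cut being the disjoint ray $a_1^0\cdot\overline{\R}_+$. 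Hence $\epsilon_j$ takes opposite nonzero boundary values on the two faces of $a_j^0\cdot\overline{\R}_+$; one face is approached from within $\Omega^-$, where $\epsilon_j\equiv1$, so on the other face, approached from within $\Omega^+$, the value is $-1$, i.e. $\epsilon_j\equiv-1$ on $\Omega^+$, as required.

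I do not anticipate a genuine obstacle here; the only points demanding care are the bookkeeping of which sector $\Lambda_\ell$ lies in which component $\Omega^\pm$ (a matter purely of the cyclic ordering of the $\varphi_\ell$) and the clean statement that crossing the ray $a_j^0\cdot\overline{\R}_+$ reverses the sign of $\sqrt{-\lambda/a_j^0}$ while leaving $\sqrt{-\lambda/a_1^0}$ unchanged. An alternative to the boundary-value argument would be to evaluate $\epsilon_j$ directly at one convenient point of $\Omega^+$, but the jump-across-the-cut formulation seems cleanest.
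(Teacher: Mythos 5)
Your proof is correct and follows essentially the same argument as the paper: $\epsilon_j$ is holomorphic with $\epsilon_j^2\equiv 1$, hence locally constant on $\Omega_{1,j}$, equals $1$ on the component containing $\Lambda_j$, and changes sign across one of the two removed rays, which forces the value $-1$ on the other component containing $\Lambda_1,\dots,\Lambda_{j-1}$. The only (immaterial) difference is that you detect the sign jump across the ray $a_j^0\cdot\overline{\R}_+$, where the numerator $\sqrt{-\lambda/a_j^0}$ flips sign, whereas the paper crosses $a_1^0\cdot\overline{\R}_+$, where the denominator flips.
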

\begin{proof}
Clearly, $\epsilon_1(\lambda)=1$ for every $\lambda\in \Omega_{1,1}$. 
If $j\not=1$, then $\Omega_{1,j}$ has
two connected components. Since $\sqrt{-\lambda/a_j^0}$ is continuous 
across the ray $a_1^0\cdot \overline{\R}_+$, and since for
$\lambda_{\varphi}=e^{i\varphi}a_1^0$ we have
\[ 
   \lim_{\varphi\to 0^-}\sqrt{-\lambda_{\varphi}/a_1^0}=i
   \quad\text{and}\quad 
   \lim_{\varphi\to 0^+}\sqrt{-\lambda_{\varphi}/a_1^0}=-i,
\]
we see that $\epsilon_j$ changes sign across the ray 
$a_1^0\cdot \overline{\R}_+$,  so $\epsilon_j$ is $1$ in the component 
of $\Omega_{1,j}$ containing $\Lambda_j$ and $-1$ in the other. Thus 
$\epsilon_j=-1$ on $\bigcup_{\ell=1}^{j-1}\Lambda_\ell$, and
$\epsilon_j=1$ on $\bigcup_{\ell=j}^{n_p}\Lambda_\ell$. 
\end{proof}

\begin{proposition}\label{SectorsSpectrum}
Let $\gamma_p$ be given by \eqref{deltaConditionsMatrix} with
$\nu_p=0$. Then, for any collection of $m<k_p$ components of 
$\bgres(A_{p,\wedge})$, there is a choice of $(c_1',\dots,c_{k_p}')
\in \C^{k_p}\minus\{0\}$ in $\gamma_p$ such that
\[ \spec\bigl(A_{p,\wedge}\big|_{\Dom_{\gamma_p}}\bigr)
   = \bgspec(A_{p,\wedge})\cup \bigcup_{k=1}^m \Lambda_{j_k}. \]
\end{proposition}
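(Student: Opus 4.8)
The plan is to use Example~\ref{deltaConditions} together with Lemma~\ref{EpsilonValues} to convert, component by component, the spectral condition into a single scalar linear condition on the numbers $c_1',\dots,c_{k_p}'$, and then to solve the resulting finite, combinatorial inverse problem by hand.

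Fix a component $\Lambda_\ell$ of $\bgres(A_{p,\wedge})$, $1\le\ell\le n_p$, and write $a_i(0)=|a_i(0)|\,a_{\sigma(i)}^0$, where $\sigma\colon\{1,\dots,k_p\}\to\{1,\dots,n_p\}$ records which of the distinct normalized arguments each $a_i(0)$ carries; in particular $\sigma^{-1}(j)\ne\varnothing$ for all $j$. On $\bgres(A_{p,\wedge})$ the function $\lambda\mapsto\sqrt{a_1^0}\sqrt{-\lambda/a_1^0}$ is holomorphic and nowhere zero, and, by the definition \eqref{SignFunction}, the positive-real-part roots satisfy $\sqrt{-\lambda/a_i(0)}=\epsilon_{\sigma(i)}(\lambda)\,\sqrt{a_1^0}\sqrt{-\lambda/a_1^0}\big/\bigl(\sqrt{|a_i(0)|}\,\sqrt{a_{\sigma(i)}^0}\bigr)$. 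Substituting this into the left side of the eigenvalue equation \eqref{EigenwertEquation} with $\nu_p=0$ and grouping the summands by the fibres of $\sigma$, that left side becomes $\sqrt{a_1^0}\sqrt{-\lambda/a_1^0}\bigl(\sum_{j=1}^{n_p}\epsilon_j(\lambda)\,T_j\bigr)$, where $T_j:=\sum_{i\in\sigma^{-1}(j)}c_i'\big/\bigl(\sqrt{|a_i(0)|}\,\sqrt{a_j^0}\bigr)$. Since for $\nu_p=0$ only outcomes $(a)$ and $(c)$ of Example~\ref{deltaConditions} occur, $\Lambda_\ell\subset\spec\bigl(A_{p,\wedge}\big|_{\Dom_{\gamma_p}}\bigr)$ if and only if this quantity vanishes on $\Lambda_\ell$, i.e.\ iff $\sum_{j=1}^{n_p}\epsilon_j(\Lambda_\ell)\,T_j=0$, and otherwise $\Lambda_\ell\cap\spec=\varnothing$. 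Inserting the values from Lemma~\ref{EpsilonValues} and setting $U_\ell:=T_1+\cdots+T_\ell$, $U_0:=0$, this sum equals $\sum_{j\le\ell}T_j-\sum_{j>\ell}T_j=2U_\ell-U_{n_p}$. Combining with Lemma~\ref{bgspecwedge},
\[
\spec\bigl(A_{p,\wedge}\big|_{\Dom_{\gamma_p}}\bigr)=\bgspec(A_{p,\wedge})\cup\bigcup_{\ell\,:\,2U_\ell=U_{n_p}}\Lambda_\ell ,
\]
so it remains, for a given $J\subset\{1,\dots,n_p\}$ with $|J|=m<k_p$, to choose $c'=(c_1',\dots,c_{k_p}')\in\C^{k_p}\minus\{0\}$ realizing $\{\ell:2U_\ell=U_{n_p}\}=J$.

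I do this in two stages. First I prescribe the partial sums: if $n_p\notin J$, set $U_{n_p}=2$, $U_\ell=1$ for $\ell\in J$, and $U_\ell=0$ for the remaining $\ell<n_p$; if $n_p\in J$, set $U_\ell=0$ for $\ell\in J$ and $U_\ell=1$ for $\ell\notin J$. In both cases $2U_\ell=U_{n_p}$ holds for exactly the $\ell\in J$. Put $T_j:=U_j-U_{j-1}$. Second I realize these block sums by a nonzero $c'$: choose one index $i_j\in\sigma^{-1}(j)$ for each $j$, set $c_{i_j}':=T_j\sqrt{|a_{i_j}(0)|}\,\sqrt{a_j^0}$, and all other $c_i':=0$; this produces the prescribed $T_j$ and a nonzero $c'$ unless all $T_j$ vanish, which forces all $U_\ell=0$, i.e.\ $J=\{1,\dots,n_p\}$. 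In that one remaining case $m=n_p<k_p$, so some fibre $\sigma^{-1}(j_0)$ contains two distinct indices $i',i''$, and taking $c_{i'}'=1$, $c_{i''}'=-1$, all other $c_i'=0$ gives all $T_j=0$ and $c'\ne0$. Since $\nu_p=0$, the matrix \eqref{deltaConditionsMatrix} has maximal rank precisely when $c'\ne0$, so in all cases $\gamma_p$ is an admissible coupling condition, which completes the argument.

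The main obstacle is the computation in the second paragraph: keeping straight the branch conventions for $\sqrt{-\lambda/a_i(0)}$, $\sqrt{a_j^0}$ and $\sqrt{-\lambda/a_j^0}$ so that precisely the sign functions $\epsilon_j$ of \eqref{SignFunction} emerge after factoring out the reference root. Once the spectral condition has been reduced to $2U_\ell=U_{n_p}$ the inverse problem is routine, the only delicate point being the borderline case $m=n_p$ (possible only when $n_p<k_p$), where the coupling vector must be produced from a non-injective fibre of $\sigma$ rather than from the generic recipe.
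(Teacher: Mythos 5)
Your proof is correct and follows essentially the paper's own route: using Lemma~\ref{EpsilonValues} you reduce the eigenvalue equation \eqref{EigenwertEquation} with $\nu_p=0$ on each component $\Lambda_\ell$ to the vanishing of $\sum_{j}\epsilon_j(\Lambda_\ell)T_j$ with coefficients grouped over the distinct normalized leading terms, exactly as in the paper, and then solve the resulting inverse problem --- where the paper inverts the $\pm1$ sign matrix of \eqref{SignSystem}, you prescribe the telescoping partial sums $U_\ell$ directly, which is the same linear algebra in constructive form. A small bonus of your write-up is that it makes explicit the borderline case in which all $T_j$ must vanish (when the chosen collection is \emph{all} components, forcing $n_p<k_p$), producing a nonzero $c'$ from a fibre of $\sigma$ with at least two indices and checking admissibility of $\gamma_p$, details the paper leaves implicit in its final step ``use $d_1,\dots,d_{n_p}$ to find a corresponding vector $(c_1',\dots,c_{k_p}')$.''
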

\begin{proof}
Let $\Lambda_1,\dots, \Lambda_{n_p}$ be all the components of 
$\bgres(A_{p,\wedge})$, defined as in \eqref{bgresSectors} via  
an enumeration $a_1^0,\dots,a_{n_p}^0$ of the distinct normalized
coefficients of $A_{p,\wedge}$.
 
We know that $\lambda\in \bgres(A_{p,\wedge})$ is in the spectrum of
$A_{p,\wedge}$ with domain $\Dom_{\gamma_p}$ if and only if it solves 
the equation \eqref{EigenwertEquation}. Since $\nu_p=0$, on each sector 
$\Lambda_\ell$ the condition \eqref{EigenwertEquation} can be reduced to 
an equation of the form $\sum_{j=1}^{n_p} 
d_j\epsilon_j(\Lambda_\ell)/{\sqrt{a_j^0}} = 0$, where the $d_j$ are
constants and the $\epsilon_j$ are the functions from \eqref{SignFunction}.  
Thus the task is to find $d_1,\dots,d_{n_p}\in\C$ such that 
\begin{gather*}
\sum_{j=1}^{n_p} \frac{d_j\epsilon_j(\Lambda_\ell)}{\sqrt{a_j^0}} = 0
\quad \text{for } \ell\in\{j_1,\dots,j_m\}, 
\intertext{and}
\sum_{j=1}^{n_p} \frac{d_j\epsilon_j(\Lambda_\ell)}{\sqrt{a_j^0}}\not= 0
\quad \text{for } \ell\not\in\{j_1,\dots,j_m\}.
\end{gather*}
To this end, consider the system
\begin{equation}\label{SignSystem}
\begin{pmatrix}
\epsilon_1(\Lambda_{1}) & \cdots & \epsilon_{n_p}(\Lambda_{1}) \\
\vdots & & \vdots \\
\epsilon_1(\Lambda_{n_p}) & \cdots & \epsilon_{n_p}(\Lambda_{n_p}) 
\end{pmatrix}
\begin{pmatrix}
y_1 \\ \vdots \\ y_{n_p}
\end{pmatrix} =
\begin{pmatrix}
\delta_1 \\ \vdots \\ \delta_{n_p}
\end{pmatrix}, 
\end{equation}
where $\delta_\ell=0$ for $\ell\in\{j_1,\dots,j_m\}$ and $\delta_\ell=1$
for $\ell\not\in\{j_1,\dots,j_m\}$.  By Lemma~\ref{EpsilonValues}, 
the entries of the matrix $[\epsilon_j(\Lambda_\ell)]_{\ell,j}$ are $1$
on and below the diagonal, and $-1$ above the diagonal. 
Since this matrix is regular, the system \eqref{SignSystem} is solvable. 
Finally, if $(y_1,\dots,y_{n_p})$ is a solution vector of \eqref{SignSystem}, 
then we choose $d_j=y_j \sqrt{a_j^0}$ and use $d_1,\dots,d_{n_p}$ to find 
a corresponding vector $(c_1',\dots,c_{k_p}')\in \C^{k_p}\minus\{0\}$. 
\end{proof} 

\section{Resolvent decay for the model operator}
\label{Sec-DecModelOperator}

We now analyze the existence of sectors of minimal growth for the model
operator $A_{p,\wedge}$ with an admissible domain $\Dom_{\gamma_p}$.
Obviously, a necessary condition for a closed sector $\Lambda$ to be of
minimal growth for $A_{p,\wedge}$ is that $\Lambda \cap \bgspec(A_{p,\wedge}) = \{0\}$.
Since $\bgres(A_{p,\wedge})$ is a union of open sectors, 
$\Lambda \minus \{0\}$ must be contained in one of these. In fact, 
for every open sector $\Lambda_0 \subset \bgres(A_{p,\wedge})$, either every 
closed subsector $\Lambda\subset\Lambda_0\cup\{0\}$ is a sector of minimal 
growth for $A_{p,\wedge}$ with domain $\Dom_{\gamma_p}(A_{p,\wedge})$, or 
none of them is, see Proposition~\ref{TheoremWedge}.

For $\varrho > 0$ define
\[ \kappa_{\varrho} : \bigoplus_{j=1}^{k_p}L^2(\R_+) \to
\bigoplus_{j=1}^{k_p}L^2(\R_+) \]
by
\begin{equation}\label{kappa}
\kappa_{\varrho}\Bigl(\oplus_{j=1}^{k_p} u_j\Bigr) =
\oplus_{j=1}^{k_p}\big(\varrho^{1/2}u_j(\varrho x)\big).
\end{equation}
This is a strongly continuous unitary one-parameter group.

The spaces $\Dom_{\max}(A_{p,\wedge})$ and $\Dom_{\min}(A_{p,\wedge})$ are 
both $\kappa$-invariant, so $\kappa_{\varrho}$ descends to an action
$$
\kappa_{\varrho} : \Dom_{\max}(A_{p,\wedge})/\Dom_{\min}(A_{p,\wedge}) \to
\Dom_{\max}(A_{p,\wedge})/\Dom_{\min}(A_{p,\wedge}).
$$
Moreover, by means of the map
$$
\Dom_{\gamma_p}(A_{p,\wedge}) \to \Dom_{\gamma_p}(A_{p,\wedge})/
\Dom_{\min}(A_{p,\wedge}) \subset \Sing_p(A_{\wedge}),
$$
the admissible domains are in one-to-one correspondence with 
$k_p$-dimensional subspaces of $\Sing_p(A_{\wedge})$. 
Hence it makes sense to consider the induced flow
$$
\kappa_{\varrho} : 
\Gr_{k_p}(\Sing_p(A_{\wedge})) \to  \Gr_{k_p}(\Sing_p(A_{\wedge})),
\quad \varrho>0,
$$
on the Grassmannian $\Gr_{k_p}(\Sing_p(A_{\wedge}))$ of $k_p$-dimensional
subspaces of $\Sing_p(A_{\wedge})$. The correspondence
$$
\gamma_p \longleftrightarrow \Dom_{\gamma_p}(A_{p,\wedge})
$$
induces an identification $\Gr_{k_p}(\Sing_p(A_{\wedge})) \cong
G_{k_p,2k_p}(\C)$, and so we get the flow
\begin{equation}\label{kappaflowGrass}
\kappa_{\varrho} : G_{k_p,2k_p}(\C) \to G_{k_p,2k_p}(\C), \quad\varrho>0.
\end{equation}
More precisely, if the class $[\gamma_p]\in G_{k_p,2k_p}(\C)$ is 
represented by 
\[ \gamma_p= \begin{pmatrix} C_p & C_p' \end{pmatrix}, \]
then $\kappa_{\varrho}[\gamma_p]$ can be represented by 
\begin{equation*} \label{kapparepresentation}
\begin{pmatrix}
C_p & \varrho^{-1}C_p'
\end{pmatrix}.
\end{equation*}
If $\rk C_p'= \ell$, $0 \leq \ell \leq k_p$, then the matrix
$\begin{pmatrix} C_p & C_p' \end{pmatrix}$ is equivalent to
\begin{equation*} 
\begin{pmatrix}
C_{p,1} & C_{p,1}' \\
C_{p,2} & 0
\end{pmatrix},
\end{equation*}
where $C_{p,1}$ and $C_{p,1}'$ are $(\ell\times k_p)$-matrices, and so
\[
\begin{pmatrix}
C_p & \varrho^{-1}C_p'
\end{pmatrix}
\sim
\begin{pmatrix}
C_{p,1} & \varrho^{-1} C_{p,1}' \\
C_{p,2} & 0
\end{pmatrix}
\sim
\begin{pmatrix}
\varrho C_{p,1} & C_{p,1}' \\
C_{p,2} & 0
\end{pmatrix}.
\]

As a consequence, we obtain the following:

\begin{proposition}\label{LimitingDomains}
For every $[\gamma_p] \in G_{k_p,2k_p}(\C)$, the limit of 
$\kappa_{\varrho}[\gamma_p]$ as $\varrho \to 0$ exists in 
$G_{k_p,2k_p}(\C)$. Moreover,  if the right $(k_p \times k_p)$-block of 
$\gamma_p$ has rank $\ell$, $0 \leq \ell \leq k_p$, then 
$\lim\limits_{\varrho \to 0} \kappa_{\varrho}[\gamma_p]$ can be 
represented by a matrix of the form
\begin{equation} \label{LimitingDomainMatrix}
\begin{pmatrix}
0 & \cdots & 0 & c_{1,1}' & \cdots & c_{1,k_p}' \\
\vdots & & \vdots & \vdots & & \vdots \\
0 & \cdots & 0 & c_{\ell,1}' & \cdots & c_{\ell,k_p}' \\
c_{\ell+1,1} & \cdots & c_{\ell+1,k_p} & 0 & \cdots & 0 \\
\vdots & & \vdots & \vdots & & \vdots \\
c_{k_p,1} & \cdots & c_{k_p,k_p} & 0 & \cdots & 0
\end{pmatrix}.
\end{equation}
\end{proposition}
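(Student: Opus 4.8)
The plan is to read the statement off the explicit matrix computation carried out immediately before it. Recall from that computation that $\kappa_{\varrho}[\gamma_p]$ is represented by $\begin{pmatrix} C_p & \varrho^{-1}C_p'\end{pmatrix}$, so the whole question is to understand how this family of classes behaves in $G_{k_p,2k_p}(\C)$ as $\varrho\to 0$. First I would fix a representative $\gamma_p=\begin{pmatrix} C_p & C_p'\end{pmatrix}\in V_{k_p,2k_p}(\C)$ with $\rk C_p'=\ell$, and, by left multiplication with a fixed $g\in\mathrm{GL}(k_p,\C)$, bring it into the block form $\begin{pmatrix} C_{p,1} & C_{p,1}' \\ C_{p,2} & 0\end{pmatrix}$ in which the $(\ell\times k_p)$-block $C_{p,1}'$ has rank $\ell$. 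Since left multiplication by $g$ preserves the rank, the full matrix still has rank $k_p$; as its top $\ell$ rows are linearly independent (because $C_{p,1}'$ already has $\ell$ independent rows), all $k_p$ of its rows are independent, and in particular the $((k_p-\ell)\times k_p)$-block $C_{p,2}$ has rank $k_p-\ell$.

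Next I would rescale. For $\varrho>0$ the diagonal matrix with first $\ell$ diagonal entries equal to $\varrho$ and the remaining $k_p-\ell$ equal to $1$ lies in $\mathrm{GL}(k_p,\C)$; applying $g$ and then this diagonal matrix to $\begin{pmatrix} C_p & \varrho^{-1}C_p'\end{pmatrix}$ produces the equivalent representative $\begin{pmatrix} \varrho C_{p,1} & C_{p,1}' \\ C_{p,2} & 0\end{pmatrix}$ of $\kappa_{\varrho}[\gamma_p]$, exactly as in the display preceding the proposition. This is a polynomial, hence continuous, function of $\varrho$ on all of $[0,\infty)$, with value at $\varrho=0$ equal to $\begin{pmatrix} 0 & C_{p,1}' \\ C_{p,2} & 0\end{pmatrix}$.

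It remains to check that this limiting matrix still has maximal rank $k_p$, so that it defines a genuine point of $G_{k_p,2k_p}(\C)$. Its two nonzero blocks $C_{p,1}'$ and $C_{p,2}$ occupy disjoint sets of rows and disjoint sets of columns, so its rank equals $\rk C_{p,1}'+\rk C_{p,2}=\ell+(k_p-\ell)=k_p$ by the first step. Hence the path $\varrho\mapsto\begin{pmatrix} \varrho C_{p,1} & C_{p,1}' \\ C_{p,2} & 0\end{pmatrix}$ stays in $V_{k_p,2k_p}(\C)$ for all $\varrho\ge 0$; since the quotient projection $V_{k_p,2k_p}(\C)\to G_{k_p,2k_p}(\C)$ is continuous, $\kappa_{\varrho}[\gamma_p]$ converges as $\varrho\to 0$ to the class of $\begin{pmatrix} 0 & C_{p,1}' \\ C_{p,2} & 0\end{pmatrix}$, which is precisely of the form \eqref{LimitingDomainMatrix}.

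The computation is essentially mechanical. The one step that genuinely uses the hypothesis is the rank bookkeeping in the first paragraph --- namely, that $C_{p,2}$ has rank exactly $k_p-\ell$, so that the limit does not drop rank --- and I expect this, together with the (routine) observation that convergence of the chosen continuous lifts forces convergence in the quotient $G_{k_p,2k_p}(\C)$, to be the only point requiring care.
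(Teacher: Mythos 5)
Your proof is correct and follows essentially the same route as the paper, which states the proposition as an immediate consequence of exactly the matrix manipulation you reproduce: pass to the block form $\begin{pmatrix} C_{p,1} & C_{p,1}' \\ C_{p,2} & 0\end{pmatrix}$, rescale the first $\ell$ rows, and let $\varrho\to 0$. The only addition is your explicit verification that $\rk C_{p,2}=k_p-\ell$ so the limiting matrix keeps maximal rank, a point the paper leaves implicit; this is a welcome but minor elaboration, not a different argument.
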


\medskip
\noindent
The domain $\Dom_{0}(A_{p,\wedge})$ induced by the coupling condition 
$\lim\limits_{\varrho \to 0}\kappa_{\varrho}[\gamma_p]$ will be referred to 
as the \emph{limiting domain} of $\Dom_{\gamma_p}(A_{p,\wedge})$ with respect 
to the $\kappa$-flow. Note that this limiting domain is $\kappa$-invariant.

\begin{lemma}\label{InvEverywhere}
Let $[\gamma_p] \in G_{k_p,2k_p}(\C)$ represent a $\kappa$-invariant domain 
$\Dom_{\gamma_p}$ for $A_{p,\wedge}$. Let $\Lambda_0$ be an open sector in 
$\bgres(A_{p,\wedge})$. Then 
\[ \text{either }\; \Lambda_0 \subset 
   \spec\bigl(A_{p,\wedge}\big|_{\Dom_{\gamma_p}}\bigr) 
   \;\text{ or }\; \Lambda_0 \cap 
   \spec\bigl(A_{p,\wedge}\big|_{\Dom_{\gamma_p}}\bigr) =\varnothing.
\]
Moreover, if $\Lambda_0 \cap \spec\bigl(A_{p,\wedge}\big|_{\Dom_{\gamma_p}}
\bigr)= \varnothing$, then every closed subsector $\Lambda \subset 
\Lambda_0\cup\{0\}$ is a sector of minimal growth for $A_{p,\wedge}$ 
with domain $\Dom_{\gamma_p}$.
\end{lemma}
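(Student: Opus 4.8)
The plan is to exploit the $\kappa$-invariance of $\Dom_{\gamma_p}$ to convert the dichotomy and the resolvent estimate into statements that, after rescaling, only need to be checked along a single ray. First I would establish the dichotomy. For $\lambda\in\bgres(A_{p,\wedge})$, Proposition~\ref{SpectrumonWedge} tells us that $\lambda\in\spec(A_{p,\wedge}|_{\Dom_{\gamma_p}})$ iff $\det(C_p-C_p'\Delta(\lambda))=0$, where $\Delta(\lambda)$ is diagonal with entries $\sqrt{-\lambda/a_j(0)}$. The group $\kappa_\varrho$ acts on the class $[\gamma_p]$ by $(C_p\;\,C_p')\mapsto(C_p\;\,\varrho^{-1}C_p')$, and $\kappa$-invariance of $\Dom_{\gamma_p}$ means that for every $\varrho>0$ the matrix $(C_p\;\,\varrho^{-1}C_p')$ is equivalent (via left multiplication by some $T_\varrho\in\mathrm{GL}(k_p,\C)$) to $(C_p\;\,C_p')$. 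Now observe that $\Delta(\varrho^{2}\lambda)=\varrho\,\Delta(\lambda)$ (the root with positive real part scales accordingly), so $\det(C_p-C_p'\Delta(\varrho^2\lambda))=\det(C_p-\varrho C_p'\Delta(\lambda))$; by $\kappa$-invariance this vanishes iff $\det(C_p-C_p'\Delta(\lambda))$ does. Hence membership of $\lambda$ in the spectrum depends only on the ray through $\lambda$, i.e.\ on $\arg\lambda$. Since each component $\Lambda_0$ of $\bgres(A_{p,\wedge})$ is an open sector (a connected union of such rays), this immediately gives the first assertion: $\Lambda_0$ is either entirely in the spectrum or disjoint from it.

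Next, assuming $\Lambda_0\cap\spec(A_{p,\wedge}|_{\Dom_{\gamma_p}})=\varnothing$, I would prove that every closed subsector $\Lambda\subset\Lambda_0\cup\{0\}$ is a sector of minimal growth. Fix such a $\Lambda$. For $\lambda\in\Lambda\minus\{0\}$ and $\varrho>0$ one has the scaling identity
\begin{equation*}
(A_{p,\wedge}-\varrho^2\lambda)\kappa_\varrho = \varrho^2\,\kappa_\varrho(A_{p,\wedge}-\lambda)
\end{equation*}
on $C_c^\infty$, since $A_{p,\wedge}$ is a sum of operators $a_j(0)D_x^2$, each homogeneous of degree $2$ under $\kappa_\varrho$. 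Because $\kappa_\varrho$ preserves $\Dom_{\max}$, $\Dom_{\min}$, and—by the invariance hypothesis—the domain $\Dom_{\gamma_p}$ itself, this identity persists on $\Dom_{\gamma_p}$. Therefore, once we know $A_{p,\wedge}-\lambda_0:\Dom_{\gamma_p}\to\bigoplus L^2(\R_+)$ is invertible for one $\lambda_0\in\Lambda\minus\{0\}$ on the unit circle, invertibility for all $\lambda=\varrho^2\lambda_0$ follows, together with the scaling law for resolvents
\begin{equation*}
(A_{p,\wedge}-\varrho^2\lambda_0)^{-1} = \varrho^{-2}\,\kappa_\varrho\,(A_{p,\wedge}-\lambda_0)^{-1}\,\kappa_\varrho^{-1}.
\end{equation*}
Since $\kappa_\varrho$ is unitary, taking operator norms gives $\|(A_{p,\wedge}-\varrho^2\lambda_0)^{-1}\|=\varrho^{-2}\|(A_{p,\wedge}-\lambda_0)^{-1}\|=O(|\varrho^2\lambda_0|^{-1})$, which is exactly \eqref{Decay}. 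A compactness argument over the finitely many (or continuum of) arguments in $\Lambda\cap\{|\lambda|=1\}$, using that the resolvent depends continuously on $\lambda$ and that $\Lambda\cap\{|\lambda|=1\}$ is compact and disjoint from the spectrum, upgrades this to a uniform bound.

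The step I expect to require the most care is verifying that $A_{p,\wedge}-\lambda:\Dom_{\gamma_p}\to\bigoplus L^2(\R_+)$ is genuinely \emph{invertible} (not merely Fredholm of index zero) for $\lambda\in\Lambda_0$, and that the inverse is a bounded operator on $L^2$. By Lemma~\ref{AlgebraicIntermediate} applied with $\mathcal A=A_{p,\wedge}-\lambda$—whose hypotheses hold since $\lambda\in\bgres(A_{p,\wedge})$—bijectivity on $\Dom_{\gamma_p}$ is equivalent to the transversality condition \eqref{IntersecCond} failing, i.e.\ to $\Dom_{\gamma_p}/\Dom_{\min}$ being complementary to $(\K_{p,\wedge}(\lambda)+\Dom_{\min})/\Dom_{\min}$; and we have just seen this is implied by $\lambda\notin\spec(A_{p,\wedge}|_{\Dom_{\gamma_p}})$. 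Boundedness of the inverse is then automatic from the closed graph theorem once one checks $\Dom_{\gamma_p}$ is closed in the graph norm, which it is, being the preimage under a bounded (finite-rank) map of $\{0\}$ inside $\Dom_{\max}$. With these points in place the scaling machinery above closes the argument.
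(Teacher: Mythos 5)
Your second half (the resolvent estimate) is essentially the paper's own argument: the $\kappa$-homogeneity identity $A_{p,\wedge}-\varrho^2\lambda=\varrho^2\kappa_\varrho(A_{p,\wedge}-\lambda)\kappa_\varrho^{-1}$ on the $\kappa$-invariant domain, unitarity of $\kappa_\varrho$, and continuity/compactness over $\Lambda\cap\{|\lambda|=1\}$ give \eqref{Decay}; your use of Lemma~\ref{AlgebraicIntermediate} and closedness of $\Dom_{\gamma_p}$ for genuine invertibility is fine. Your scale-invariance observation for the first half is also correct: if $(C_p\;\,\varrho^{-1}C_p')=T_\varrho(C_p\;\,C_p')$ with $T_\varrho$ invertible, then $T_\varrho\bigl(C_p-\varrho C_p'\Delta(\lambda)\bigr)=C_p-C_p'\Delta(\lambda)$, so the determinant condition of Proposition~\ref{SpectrumonWedge} at $\varrho^2\lambda$ and at $\lambda$ vanish simultaneously; hence $\spec\bigl(A_{p,\wedge}\big|_{\Dom_{\gamma_p}}\bigr)\cap\bgres(A_{p,\wedge})$ is a union of open rays.

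However, there is a genuine gap in the step ``membership depends only on $\arg\lambda$, and since $\Lambda_0$ is connected this immediately gives the dichotomy.'' Ray-invariance says nothing about how the set of bad \emph{arguments} sits inside the arc of arguments of $\Lambda_0$: a priori the spectrum could consist of, say, a single ray or a closed subsector of $\Lambda_0$, and connectedness does not exclude this (the spectrum is relatively closed and the resolvent set relatively open in $\Lambda_0$, which is consistent with both being nonempty). You need an additional input to pass from ``union of rays'' to ``all or nothing.'' Two ways to close it: (i) note that on $\Lambda_0$ one can write $\sqrt{-\lambda/a_j(0)}=\sqrt{-\lambda}/\sqrt{a_j(0)}$ for consistent branch choices, so $\lambda\mapsto\det\bigl(C_p-C_p'\Delta(\lambda)\bigr)$ is a polynomial in the holomorphic function $\sqrt{-\lambda}$; its zero set in $\Lambda_0$ is therefore either all of $\Lambda_0$ or discrete, and a nonempty scale-invariant set (a ray) is not discrete; or (ii) follow the paper: $\kappa$-invariance forces $\gamma_p$ to be equivalent to a matrix of the form \eqref{LimitingDomainMatrix}, and then the determinant factors as $(-1)^\ell(\sqrt{-\lambda})^\ell\det S_{\gamma_p,\Lambda_0}$ with $S_{\gamma_p,\Lambda_0}$ independent of $\lambda$, which vanishes identically on $\Lambda_0$ or nowhere. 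Either repair is short, but as written the dichotomy is not proved.
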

\begin{proof}
An elementary argument (or directly by Proposition~\ref{LimitingDomains}) 
shows that if $\Dom_{\gamma_p}$ is $\kappa$-invariant, then 
$\gamma_p$ is equivalent to a matrix of the form
\eqref{LimitingDomainMatrix}
for some $0\le \ell\le k_p$.  Let $\lambda\in\Lambda_0\subset
\bgres(A_{p,\wedge})$ and choose the square roots $\sqrt{-\lambda}$ 
and $\sqrt{a_j(0)}$ in such a way that $\Re \bigl(\sqrt{-\lambda}/
\sqrt{a_j(0)}\bigr)>0$.  According to \eqref{AwedgeDet} we now get that 
$\lambda \in \spec\bigl(A_{p,\wedge}\big|_{\Dom_{\gamma_p}}\bigr)$ 
if and only if
\begin{equation*}
(-1)^\ell (\sqrt{-\lambda})^\ell \det\begin{pmatrix}
\frac{c_{1,1}'}{\sqrt{a_1(0)}} & \cdots &
\frac{c_{1,k_p}'}{\sqrt{a_{k_p}(0)}} \\
\vdots & & \vdots \\
\frac{c_{\ell,1}'}{\sqrt{a_1(0)}} & \cdots &
\frac{c_{\ell,k_p}'}{\sqrt{a_{k_p}(0)}} \\
c_{\ell+1,1} & \cdots & c_{\ell+1,k_p} \\
\vdots & & \vdots \\
c_{k_p,1} & \cdots & c_{k_p,k_p}
\end{pmatrix} = 0.
\end{equation*}
This proves the assertion about the spectrum.
It is easy to verify that
\begin{equation}\label{kappaHomogeneity}
A_{p,\wedge}-\varrho^2\lambda = \varrho^2 \kappa_{\varrho}\bigl(
A_{p,\wedge}-\lambda\bigr)\kappa_{\varrho}^{-1}: 
\Dom_{\gamma_p}(A_{p,\wedge})\to \bigoplus_{j=1}^{k_p}L^2(\R_+),
\end{equation}
and thus the operator norm in $\bigoplus_{j=1}^{k_p}L^2(\R_+)$ of
$$
|\lambda|\cdot \bigl(A_{p,\wedge}\big|_{\Dom_{\gamma_p}}-\lambda\bigr)^{-1} 
= \kappa_{|\lambda|^{1/2}}\bigl(A_{p,\wedge}\big|_{\Dom_{\gamma_p}}
-\hat{\lambda}\bigr)^{-1}\kappa_{|\lambda|^{1/2}}^{-1}
$$ 
is $O(1)$ as $|\lambda| \to \infty$, uniformly for $\hat{\lambda} = 
\lambda/|\lambda|$ in compact sets. Note that the group action 
$\kappa_{\varrho}$ is unitary in $\bigoplus_{j=1}^{k_p}L^2(\R_+)$.
This proves the lemma.
\end{proof}
The dilation property \eqref{kappaHomogeneity}, often referred to
as $\kappa$-homogeneity, is systematically used by Schulze~\cite{SchuNH} 
in his theory of algebras of pseudodifferential operators 
on manifolds with singularities.

\medskip
Let $[\gamma_p]\in G_{k_p,2k_p}(\C)$, and let $\gamma_{p,0}$ be a
representative of $\lim_{\varrho\to 0}\kappa_{\varrho}[\gamma_p]$, 
where $\gamma_{p,0}$ is of the form \eqref{LimitingDomainMatrix}.
Let $\Lambda\subset\C$ be any sector with $a_j(0)\notin \Lambda$ for
$j=1,\dots,k_p$.
With $\gamma_p$ and $\Lambda$ we associate the matrix
\begin{equation} \label{SpecMatrix}
S_{\gamma_p,\Lambda} = 
\begin{pmatrix}
\frac{c_{1,1}'}{\sqrt{a_1(0)}} & \cdots &
\frac{c_{1,k_p}'}{\sqrt{a_{k_p}(0)}} \\
\vdots & & \vdots \\
\frac{c_{\ell,1}'}{\sqrt{a_1(0)}} & \cdots &
\frac{c_{\ell,k_p}'}{\sqrt{a_{k_p}(0)}} \\
c_{\ell+1,1} & \cdots & c_{\ell+1,k_p} \\
\vdots & & \vdots \\
c_{k_p,1} & \cdots & c_{k_p,k_p}
\end{pmatrix}
\end{equation}
using the entries of $\gamma_{p,0}$. The roots $\sqrt{a_j(0)}$
are chosen so that, if $\lambda_0\in\Lambda$ and $\sqrt{-\lambda_0}$ is
any fixed square root of $-\lambda_0$, then the real parts of
$\sqrt{-\lambda_0}/\sqrt{a_j(0)}$ all have the same sign.

\begin{proposition}\label{TheoremWedge}
Let $[\gamma_p] \in G_{k_p,2k_p}(\C)$ be an admissible coupling condition at 
$p$. Let $\Lambda_0$ be an open sector in $\bgres(A_{p,\wedge})$.
If $\det S_{\gamma_p,\Lambda_0}\not=0$, then every closed subsector 
$\Lambda\subset \Lambda_0\cup\{0\}$ is a sector of minimal growth for $A_{p,\wedge}$ 
with domain $\Dom_{\gamma_p}$.
\end{proposition}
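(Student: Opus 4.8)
The plan is to reduce the general admissible coupling condition to its $\kappa$-limiting domain, where we can invoke Lemma~\ref{InvEverywhere} directly, and then transfer resolvent information back across the $\kappa$-flow by a perturbation/compactness argument. First I would observe that the hypothesis $\det S_{\gamma_p,\Lambda_0}\neq 0$ is precisely the condition, via the determinant formula in the proof of Lemma~\ref{InvEverywhere} (namely $(-1)^\ell(\sqrt{-\lambda})^\ell\det S_{\gamma_p,\Lambda_0}=0$), that guarantees $\Lambda_0\cap\spec\bigl(A_{p,\wedge}\big|_{\Dom_{0}}\bigr)=\varnothing$, where $\Dom_0$ is the limiting domain of $\Dom_{\gamma_p}$. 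Since $\Dom_0$ is $\kappa$-invariant, Lemma~\ref{InvEverywhere} then yields that every closed subsector $\Lambda\subset\Lambda_0\cup\{0\}$ is a sector of minimal growth for $A_{p,\wedge}$ with domain $\Dom_0$. So the content of the proposition is really: \emph{a sector of minimal growth for the limiting domain is also one for the original domain}, which is plausible because $\kappa_\varrho[\gamma_p]\to[\gamma_p]_0$ and the resolvent estimate is $\kappa$-homogeneous.

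The second step makes this precise. Using the block-triangular normal forms computed right before Proposition~\ref{LimitingDomains}, write $\gamma_p$ in the form with right block of rank $\ell$, so that $\kappa_\varrho[\gamma_p]$ is represented by $\bigl(\begin{smallmatrix}\varrho C_{p,1} & C_{p,1}'\\ C_{p,2} & 0\end{smallmatrix}\bigr)$, which converges as $\varrho\to 0$ to the representative $\gamma_{p,0}$ of \eqref{LimitingDomainMatrix}. By Proposition~\ref{SpectrumonWedge}, for $\lambda\in\bgres(A_{p,\wedge})$ we have $\lambda\in\spec\bigl(A_{p,\wedge}\big|_{\Dom_{\kappa_\varrho\gamma_p}}\bigr)$ iff $\det\bigl((\kappa_\varrho\gamma_p)\cdot(\begin{smallmatrix}I\\-\Delta(\lambda)\end{smallmatrix})\bigr)=0$, and one computes, using the normal form, that this determinant equals (up to a nonvanishing factor)
\begin{equation*}
\det\begin{pmatrix}
\varrho C_{p,1} - C_{p,1}'\Delta(\lambda)\\ C_{p,2}
\end{pmatrix},
\end{equation*}
which at $\varrho=0$ is, after the column scaling absorbing $\Delta(\lambda)$, proportional to $(\sqrt{-\lambda})^\ell\det S_{\gamma_p,\Lambda_0}\neq 0$ on $\Lambda_0$. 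Thus for each fixed $\lambda\in\Lambda_0$ with $|\lambda|$ large the operator $A_{p,\wedge}\big|_{\Dom_{\gamma_p}}-\lambda$ is invertible; the point is to get the \emph{uniform} $O(|\lambda|^{-1})$ bound.

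For the uniformity I would exploit $\kappa$-homogeneity \eqref{kappaHomogeneity} in the standard way: the resolvent of $A_{p,\wedge}\big|_{\Dom_{\gamma_p}}$ at $\lambda$ is unitarily conjugate via $\kappa_{|\lambda|^{1/2}}$ to $|\lambda|^{-1}$ times the resolvent of $A_{p,\wedge}\big|_{\Dom_{\kappa_{|\lambda|^{-1/2}}\gamma_p}}$ at $\hat\lambda=\lambda/|\lambda|$. As $|\lambda|\to\infty$ the domain parameter $\kappa_{|\lambda|^{-1/2}}\gamma_p$ converges in $G_{k_p,2k_p}(\C)$ to the limiting domain $[\gamma_p]_0$, and $\hat\lambda$ ranges over a compact arc of the unit circle inside $\Lambda_0$. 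So the required estimate follows once we know that $\bigl(A_{p,\wedge}\big|_{\Dom_{[\gamma']}}-\hat\lambda\bigr)^{-1}$ depends continuously on $([\gamma'],\hat\lambda)$ in operator norm on a neighborhood of $\{[\gamma_p]_0\}\times(\text{compact arc})$, together with invertibility there; invertibility at $[\gamma_p]_0$ is exactly Lemma~\ref{InvEverywhere}, and invertibility persists on a neighborhood because the set of invertible operators is open. The main obstacle is establishing this norm-continuity of the resolvent as the domain varies in the Grassmannian: one needs a uniform two-sided bound on $\bigl(A_{p,\wedge}\big|_{\Dom_{[\gamma']}}-\hat\lambda\bigr)^{-1}$, which I would obtain by writing out the resolvent explicitly from $\K_{p,\wedge}(\hat\lambda)$ and the projection onto $\Dom_{[\gamma']}/\Dom_{\min}$ along $(\K_{p,\wedge}(\hat\lambda)+\Dom_{\min})/\Dom_{\min}$ (cf. Lemma~\ref{AlgebraicIntermediate}), noting that this finite-dimensional projection varies continuously precisely where $\det S_{\gamma_p,\Lambda_0}\neq 0$, and that the $\Dom_{\min}$-part of the resolvent is the fixed, bounded inverse coming from Proposition~\ref{InjectiveSurjective} applied to $A_{p,\wedge}-\hat\lambda$. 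Combining these gives the uniform bound and hence the sector of minimal growth.
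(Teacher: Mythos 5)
Your argument is correct in substance, but it takes a genuinely different route from the paper. The paper's proof is two lines: it cites the general cone-operator results of \cite{GKM1,GKM3}, which assert that a closed sector in $\bgres(A_{p,\wedge})$ is a sector of minimal growth for $A_{p,\wedge}$ with a given domain if and only if it is one for the limiting domain(s) under the $\kappa$-flow, and then concludes from Lemma~\ref{InvEverywhere}. You instead re-prove the needed implication directly in this finite-deficiency setting: the identity $(A_{p,\wedge}|_{\Dom_{\gamma_p}}-\mu)^{-1}=|\mu|^{-1}\kappa_{|\mu|^{1/2}}\bigl(A_{p,\wedge}|_{\kappa_{|\mu|^{-1/2}}\Dom_{\gamma_p}}-\hat\mu\bigr)^{-1}\kappa_{|\mu|^{1/2}}^{-1}$, convergence $\kappa_{|\mu|^{-1/2}}[\gamma_p]\to[\gamma_{p,0}]$ in $G_{k_p,2k_p}(\C)$ (Proposition~\ref{LimitingDomains}), invertibility at the limit point from the determinant computation of Lemma~\ref{InvEverywhere} (which is exactly where $\det S_{\gamma_p,\Lambda_0}\neq 0$ enters), and joint norm-continuity of the resolvent in $([\gamma'],\hat\lambda)$ plus compactness of the unit arc. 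This is essentially the specialization of the cited general theorems to the present situation, so what your version buys is a self-contained, elementary proof; what the paper's version buys is brevity and, via the cited equivalence, also the converse direction ("or none of them is") mentioned before Proposition~\ref{TheoremWedge}. Two small repairs to your write-up: the norm-continuity of the resolvent in the domain parameter, which you correctly identify as the main technical point, deserves the explicit construction you only sketch (bounded right inverse of $A_{p,\wedge,\max}-\hat\lambda$, e.g.\ the Dirichlet resolvent, plus the finite-dimensional correction in $\K_{p,\wedge}(\hat\lambda)$ governed by the matrix $C-C'\Delta(\hat\lambda)$, invertible near $([\gamma_{p,0}],\hat\lambda)$ by the determinant hypothesis); and the bounded inverse on the minimal/maximal level for $A_{p,\wedge}-\hat\lambda$ comes from $\hat\lambda\in\bgres(A_{p,\wedge})$ (Lemma~\ref{bgspecwedge} and the definition \eqref{bgreswedge}), not from Proposition~\ref{InjectiveSurjective}, which concerns the operator $A$ on the graph.
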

\begin{proof}
In \cite{GKM1} and \cite{GKM3} it is proved, in the general context of 
elliptic cone operators, that a closed sector $\Lambda\subset 
\bgres(A_{p,\wedge})$ is a sector of minimal growth for $A_{p,\wedge}$ 
with domain $\Dom$ if and only if this is the case for the extension
with the limiting domain(s) with respect to the $\kappa$-flow.  Thus 
the statement follows from Lemma~\ref{InvEverywhere}.
\end{proof}

\section{Resolvent decay on the graph}\label{Sec-DecGraph}

Let $A$ be an operator as in \eqref{AGlobal} and \eqref{ALocalEdge}.
We assume that $A$ is elliptic and let $\Dom_{\gamma}$ be a domain 
specified as the kernel of a surjective block-diagonal map
$$
\gamma = \bigoplus\limits_{p \in V}\gamma_p : \bigoplus\limits_{p \in 
V}\Sing_p(A) \to \bigoplus\limits_{p \in V}\C^{k_p},
$$
see \eqref{AssembleDomainAsKernel}. For $\lambda \in\C$ let
$$
\K(\lambda) = \ker(A_{\max} - \lambda) \subset \Dom_{\max}(A).
$$

By Proposition~\ref{InjectiveSurjective} we have that, for every 
$\lambda\in\C$, $A-\lambda$ is injective on $\Dom_{\min}(A)$ and surjective
on $\Dom_{\max}(A)$. Thus Lemma~\ref{AlgebraicIntermediate} gives the 
analogue of \eqref{IntersecCond} for the operator $A$ with domain 
$\Dom_{\gamma}$: An element $\lambda \in \C$ belongs to the spectrum 
of $A_{\Dom_{\gamma}}$ if and only if
\begin{equation}\label{IntersecCondGraph}
\Dom_{\gamma}/\Dom_{\min}(A) \cap
\bigl(\K(\lambda)+\Dom_{\min}(A)\bigr)/\Dom_{\min}(A) \neq \{0\}.
\end{equation}
Note that, while the coupling condition $\gamma$ specifies the space
$\Dom_{\gamma}/\Dom_{\min}(A)$ explicitly, it is in general not easy 
to get a hold on the asymptotics of eigenfunctions in the space $\bigl(
\K(\lambda)+\Dom_{\min}(A)\bigr)/\Dom_{\min}(A)$ for each $\lambda$.
In Section~\ref{Sec-ModelOperator}, it is precisely the knowledge of the 
asymptotics of eigenfunctions for the model operator $A_{p,\wedge}$ what 
allowed us to replace the corresponding condition \eqref{IntersecCond} 
by an explicit determinant condition to decide whether a given 
$\lambda \in \C$ belongs to the spectrum of $A_{p,\wedge}$ with domain 
$\Dom_{\gamma_p}(A_{p,\wedge})$, see Proposition~\ref{SpectrumonWedge}. 

Our focus of interest here, however, is whether a given closed sector
$\Lambda \subset \C$ is a sector of minimal growth for $A$ with domain 
$\Dom_{\gamma}$.
This is an asymptotic question about the resolvent 
rather than a problem for a fixed value of $\lambda$.  Using the 
results for the model operator from Section~\ref{Sec-DecModelOperator}, 
we have the following simple answer:

\begin{theorem}\label{SectorMinGrowthGraph}
Let $\Lambda\subset\C$ be a closed sector.  Assume that in \eqref{ALocalEdge} the condition
\begin{equation}\label{ParameterElliptic} 
a_j(s) \notin \Lambda \text{ for all } s \in [-1,1]
\end{equation}
is satisfied for each $j$.  Let $\gamma$ be an admissible coupling condition.
If for every $p \in V$ the determinant of the matrix
$S_{\gamma_p,\Lambda}$ in \eqref{SpecMatrix}
is nonzero, then $\Lambda$ is a sector of minimal growth for the operator
$$
A_{\Dom_{\gamma}} : \Dom_{\gamma}\subset L^2(\Gamma) \to L^2(\Gamma).
$$
\end{theorem}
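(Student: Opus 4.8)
The strategy is to reduce the problem on the graph to the already-solved problems on the model operators $A_{p,\wedge}$ at the vertices, invoking the general machinery of elliptic cone operators from \cite{GKM1,GKM3}. Condition \eqref{ParameterElliptic} says precisely that $A-\lambda$ is parameter-elliptic on $\open\Gamma$ in the classical (interior) sense, uniformly down to the boundary; this takes care of the interior symbol and the conormal symbol at the boundary, and is exactly what is needed for the ``large parameter'' pseudodifferential parametrix construction away from the singular behavior. What remains, by the general theory, is the contribution of the wedge (model) operators at the vertices: $\Lambda$ is a sector of minimal growth for $A_{\Dom_\gamma}$ if and only if $\Lambda$ is a sector of minimal growth for each $A_{p,\wedge}$ with the domain induced by $\Dom_\gamma$ at $p$.

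First I would make explicit the matching between the data on the graph and the data on the wedges. The admissible coupling condition $\gamma$ is block-diagonal, $\gamma=\bigoplus_{p\in V}\gamma_p$, so $\Dom_\gamma/\Dom_{\min}(A)=\bigoplus_{p\in V}\Dom_p$ with $\Dom_p=\ker\gamma_p\subset\Sing_p(A)$. Via the isomorphism $\theta_p:\Sing_p(A)\to\Sing_p(A_\wedge)$ from \eqref{thetap}, the domain $\Dom_p$ corresponds to a $k_p$-dimensional subspace of $\Sing_p(A_\wedge)$, i.e.\ to an admissible domain $\Dom_{\gamma_p}(A_{p,\wedge})$ for the model operator, represented by the \emph{same} matrix $(C_p\ C_p')$ in the chosen basis. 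Thus the ``$p$-component'' of the domain of $A$ on the graph is, literally, the domain $\Dom_{\gamma_p}(A_{p,\wedge})$ of the model operator, and the matrix $S_{\gamma_p,\Lambda}$ of \eqref{SpecMatrix} is built from exactly this data together with the coefficients $a_j(0)=a_{q_j}(0)$ appearing in $A_{p,\wedge}$.

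Next I would quote the reduction theorem: in \cite{GKM1,GKM3} it is shown that, for an elliptic cone operator $A$ which is parameter-elliptic in the interior sense on the closed sector $\Lambda$ (that is, \eqref{ParameterElliptic} holds, so in particular $\Lambda\setminus\{0\}$ avoids the background spectrum at every vertex), a closed sector $\Lambda$ is a sector of minimal growth for $A_{\Dom_\gamma}$ if and only if for each vertex $p$ the sector $\Lambda$ is a sector of minimal growth for the model operator $A_{p,\wedge}$ equipped with the domain $\Dom_{\gamma_p}(A_{p,\wedge})$; the necessity of the interior condition is elementary (localize near an interior point of an edge where $a_j(s)\in\Lambda$ would hold). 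By Proposition~\ref{TheoremWedge}, the hypothesis $\det S_{\gamma_p,\Lambda}\ne0$ guarantees that every closed subsector of the open sector of $\bgres(A_{p,\wedge})$ containing $\Lambda\setminus\{0\}$ is a sector of minimal growth for $A_{p,\wedge}$ with domain $\Dom_{\gamma_p}$ --- note that since $\Lambda\setminus\{0\}$ is contained in one component $\Lambda_0$ of $\bgres(A_{p,\wedge})$, the single determinant $\det S_{\gamma_p,\Lambda_0}=\det S_{\gamma_p,\Lambda}$ is what is relevant. Combining the reduction theorem with Proposition~\ref{TheoremWedge} over all $p\in V$ yields the conclusion.

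\textbf{Main obstacle.} The technical heart is not anything we would redo here but the invocation of the reduction principle from \cite{GKM1,GKM3}: one must make sure that the definition of ``model operator'' and ``domain induced on the model operator'' used in those references agrees with the $\theta_p$-based identification here, in particular that the freezing of coefficients $A_j\mapsto a_q(0)D_x^2$ together with the passage $\Sing_q(A)\to\Sing_q(A_\wedge)$ is exactly the one for which the abstract theorem is stated. The only other point requiring a little care is checking that $\Lambda\setminus\{0\}$ genuinely lies in $\bgres(A_{p,\wedge})$ under hypothesis \eqref{ParameterElliptic}: by Lemma~\ref{bgspecwedge}, $\bgspec(A_{p,\wedge})=\bigcup_{q\in p}a_q(0)\cdot\overline{\R}_+$, and since each $a_q(0)=a_j(0)\notin\Lambda$ while $\Lambda$ is a closed sector, the ray $a_q(0)\cdot\overline{\R}_+$ meets $\Lambda$ only possibly at $0$; hence $\Lambda\cap\bgspec(A_{p,\wedge})=\{0\}$ and $\Lambda\setminus\{0\}$ sits in a single component of $\bgres(A_{p,\wedge})$, as needed to apply Proposition~\ref{TheoremWedge}.
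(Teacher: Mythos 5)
Your argument is correct and follows essentially the same route as the paper: check that $\Lambda\setminus\{0\}$ avoids $\bgspec(A_{p,\wedge})$ via Lemma~\ref{bgspecwedge}, transport the block $\gamma_p$ of the coupling condition to a domain for $A_{p,\wedge}$ via $\theta_p$ (same matrix $(C_p\ C_p')$), apply Proposition~\ref{TheoremWedge} at each vertex, and invoke the general cone-operator reduction to conclude for $A_{\Dom_\gamma}$. One bibliographic caveat: the reduction actually used in the paper is the sufficiency statement of \cite[Theorem~6.9]{GKM2}, not an if-and-only-if from \cite{GKM1,GKM3} (the converse is only partial, via \cite[Theorem~5.5]{GKM3}), but since you only use the sufficient direction this does not affect your proof.
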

\begin{proof}
The hypothesis \eqref{ParameterElliptic} and Lemma~\ref{bgspecwedge}
give $\Lambda \subset \bgres(A_{p,\wedge})$ for every $p \in V$.
Let $\Dom_{\gamma_p}$ be the domain for $A_{p,\wedge}$ such that
\begin{equation*} 
   \Dom_{\gamma_p}/\Dom_{\min}(A_{p,\wedge}) =
   \theta_{p}\Big(\pi_p \Dom_{\gamma}/\Dom_{\min}(A)\Big),
\end{equation*} 
where $\theta_p$ is the map \eqref{thetap} and $\pi_{p}:\Sing(A)\to
\Sing_p(A)$ is the canonical projection. 

The condition on the determinant of \eqref{SpecMatrix} ensures,
via Proposition~\ref{TheoremWedge}, that $\Lambda$ is a sector of 
minimal growth for $A_{p,\wedge}$ with domain
$\Dom_{\gamma_p}$. Necessarily, $\Lambda$ is then
a sector of minimal growth for the direct sum
$A_{\wedge} = \bigoplus_{p \in V}A_{p,\wedge}$ 
with the induced domain. By \cite[Theorem~6.9]{GKM2}, 
$\Lambda$ is a sector of minimal growth for $A_{\Dom_{\gamma}}$.
\end{proof}

In Example~\ref{deltaConditions} we discussed $\delta$-type
conditions $\gamma_p$ for the model operator $A_{p,\wedge}$ at a vertex $p
\in V$. Correspondingly, we can define an associated admissible coupling
condition $\gamma$ for the operator $A$ on $\Gamma$ by imposing
$\delta$-type conditions at all vertices (formally given by the same
matrix representations as for $A_{p,\wedge}$). Thus, if $\Dom_{\gamma}$
is the domain associated with $\gamma$, every $u \in \Dom_{\gamma}$ is
continuous on $\Gamma$ and the asymptotic coefficients $\beta_q$ from
\eqref{aSingFunction} at the boundary points $q \in p$ are all related 
via the last row in the matrix \eqref{deltaConditionsMatrix}.

In view of Theorem~\ref{SectorMinGrowthGraph},
Proposition~\ref{TheoremWedge}, and Proposition~\ref{SectorsSpectrum},
we have that for suitable choices of $\gamma_p$ it is possible
to obtain resolvent decay for $A_{\Dom_{\gamma}}$ along certain closed
sectors in $\C$ (determined by the leading coefficients of $A$).

A partial converse of Theorem~\ref{SectorMinGrowthGraph} follows from
\cite[Theorem~5.5]{GKM3}. The converse holds, in particular, if we consider the
operator $A$ given by various multiples of the Laplacian on the edges 
$E_j$ of the graph. In this case, we can choose specific $\delta$-type 
conditions $\gamma$ such that the resolvent of $A_{\Dom_{\gamma}}$ decays 
along certain sectors, but it does not decay along any ray in the 
complementary directions.

\section{Regular singular operators on graphs}\label{Sec-ConeOperators}

In this section we briefly survey our main result as regards the existence 
of sectors of minimal growth for regular singular operators on a 
graph $\Gamma = G/\sim$. For a thorough study of closed extensions
and resolvents of general elliptic operators on manifolds with conical
singularities the reader is referred to \cite{GKM1, GKM2, GKM3}.  

Consider operators
$$
A : C_c^{\infty}(\open \Gamma) \to C_c^{\infty}(\open \Gamma)
$$
of order $m > 0$ which along each edge $E_j\cong [-1,1]$ are of the form  
\begin{equation*}
A_j = \sum_{\nu=0}^m a_{j,\nu} \frac{1}{(1-s^2)^{m-\nu}} D_s^\nu 
\end{equation*}
with $a_{j,\nu}$ smooth on $[-1,1]$.
Assume that $A$ is elliptic, i.e., the leading coefficients
$a_{j,m}$ do not vanish.
Using the defining function $x$ we can write $A$ in the form
\begin{equation}\label{coneoperator}
A = x^{-m}\sum\limits_{\nu=0}^m a_{q,\nu}(x)(xD_x)^{\nu}
\end{equation}
near each boundary point $q \in \partial G$.  

The unbounded operator
$$
A : C_c^{\infty}(\open \Gamma) \subset L^2(\Gamma) \to L^2(\Gamma)
$$
is closable and the domain $\Dom$ of any closed extension of $A$ 
contains $\Dom_{\min}(A)$ and is contained in $\Dom_{\max}(A)$.
More specifically, an asymptotic analysis that involves the indicial 
polynomial of $A$ at the boundary and the Mellin transform reveals that
$$
\Dom_{\max}(A)/\Dom_{\min}(A) \cong \bigoplus\limits_{p \in V}
\Sing_p(A),
$$
where $\Sing_p(A)$ is a finite dimensional space of singular
functions similar to \eqref{aSingFunction}, \eqref{EndPointDecomposition}, and
\eqref{VertexDecomposition2}. However, the structure of this
space is in general more complicated than that of the operators analyzed 
in the previous sections.

A coupling condition on the graph determines a domain $\Dom_{\min}(A) \subset \Dom \subset \Dom_{\max}(A)$ with the property that
$$
\Dom/\Dom_{\min}(A) = \bigoplus\limits_{p \in V}\pi_p
\bigl(\Dom/\Dom_{\min}(A)\bigr),
$$
where $\pi_p : \Dom_{\max}(A)/\Dom_{\min}(A) \to \Sing_p(A)$ is the
canonical projection.

As discussed in the previous sections, the interplay between the domains of
$A$ and the domains of the model operator $A_\wedge$ is of
central interest for their spectral analysis.  Thus, as in 
Section~\ref{Sec-ModelOperator}, we consider the model operator
$$
A_{p,\wedge} : \bigoplus_{q\in p}C_c^{\infty}(\R_+) \to
\bigoplus_{q\in p}C_c^{\infty}(\R_+)
$$
associated with $A$ at the vertex $p$.
For $q \in p$, and $A$ as in \eqref{coneoperator}
we have $A_{p,\wedge} = x^{-m}\sum_{\nu=0}^m a_{q,\nu}(0)(xD_x)^{\nu}$
defined on the $\R_+$-axis associated with $q$. Note that $A_{p,\wedge}$ 
is a diagonal operator.

The domains of the closed extensions of $A_{p,\wedge}$ in
$\bigoplus_{q\in p}L^2(\R_+)$ are intermediate spaces
$\Dom_{\min}(A_{p,\wedge}) \subset \Dom(A_{p,\wedge}) \subset
\Dom_{\max}(A_{p,\wedge})$, and there is a natural (depending
on the boundary defining function) isomorphism
$$
 \theta_p: \Sing_p(A) \to 
 \Dom_{\max}(A_{p,\wedge})/\Dom_{\min}(A_{p,\wedge}).
$$
This linear map can  be constructed explicitly and is determined by a
simple algorithm of finitely many steps, for details see \cite{GKM1}.  
The map \eqref{thetap} is an example.
Thus, with a given domain $\Dom$ corresponding to a coupling 
condition on the graph, there is a domain $\Dom_{p,\wedge}$ for 
$A_{p,\wedge}$ given by the relation
\begin{equation}\label{AssocDomain}
\Dom_{p,\wedge}/\Dom_{\min}(A_{p,\wedge}) =
\theta_p(\pi_p(\Dom/\Dom_{\min}(A))).
\end{equation}

As shown in Sections~\ref{Sec-DecModelOperator} and 
\ref{Sec-DecGraph}, it is crucial to look 
at the flow induced by dilation of the domain $\Dom_{p,\wedge}$.  
To this end, let
$$
\kappa_{\varrho} : \bigoplus_{q\in p}L^2(\R_+) \to
\bigoplus_{q\in p}L^2(\R_+)
$$
be the group action from \eqref{kappa}. Since both domains
$\Dom_{\min}(A_{p,\wedge})$ and $\Dom_{\max}(A_{p,\wedge})$ are
$\kappa$-invariant, we get an induced action on the quotient, which
in turn gives rise to a (multiplicative) flow
$$
\kappa_{\varrho} : \Dom(A_{p,\wedge})/\Dom_{\min}(A_{p,\wedge}) \mapsto
\kappa_{\varrho}\bigl(\Dom(A_{p,\wedge})\bigr)/\Dom_{\min}(A_{p,\wedge}),
\quad \varrho > 0,
$$
on the Grassmannians of subspaces of
$\Dom_{\max}(A_{p,\wedge})/\Dom_{\min}(A_{p,\wedge})$.
We let
\begin{align*}
\Omega^{-}\bigl(\Dom_{p,\wedge}\bigr) = \{\Dom_0 \st 
& \;\exists\, 0<\varrho_k \to 0 \textup{ such that } \\
& \lim\limits_{k \to \infty}\kappa_{\varrho_k}\bigl(
\Dom_{p,\wedge}/\Dom_{\min}(A_{p,\wedge})\bigr) =
\Dom_0/\Dom_{\min}(A_{p,\wedge})\},
\end{align*}
where the convergence refers to the Grassmannian of subspaces of a
fixed dimension in $\Dom_{\max}(A_{p,\wedge})/\Dom_{\min}(A_{p,\wedge})$.

\begin{theorem}\label{GeneralTheorem}
Let $\Lambda \subset \C$ be a closed sector. Assume that on each edge
$E_j$ the leading coefficient $a_{j,m}(s)$ of $A$ satisfies
\[ a_{j,m}(s),\; (-1)^m a_{j,m}(s) \notin \Lambda
   \text{ for all } s\in[-1,1]. \]
Let $\Dom$ be a domain for $A$ associated with a coupling condition on 
a graph $\Gamma$ with vertices $V$. For $p \in V$ let $\Dom_{p,\wedge}$ 
be the domain given by \eqref{AssocDomain}. If for every $p$ and every 
domain $\Dom_0 \in \Omega^{-}\bigl(\Dom_{p,\wedge}\bigr)$ the family
\begin{equation*}
A_{p,\wedge} - \lambda : \Dom_0\subset L^2(\Gamma) \to L^2(\Gamma)
\end{equation*}
is invertible for all $\lambda\in \Lambda$ with $|\lambda| = 1$, 
then $\Lambda$ is a sector of minimal growth for the operator $A$ 
with domain $\Dom$.
\end{theorem}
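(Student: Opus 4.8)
The plan is to reduce Theorem~\ref{GeneralTheorem} to the framework of \cite{GKM1, GKM2, GKM3} exactly as in the proof of Theorem~\ref{SectorMinGrowthGraph}, but now for operators of arbitrary order with the more complicated singular function spaces $\Sing_p(A)$. First I would observe that the hypothesis $a_{j,m}(s), (-1)^m a_{j,m}(s) \notin \Lambda$ for all $s$ is precisely the condition of parameter-ellipticity of $A-\lambda$ on the interior together with the requirement that $\Lambda$ avoid the background spectrum of each model operator $A_{p,\wedge}$; the factor $(-1)^m$ enters because on the half-line $\R_+$ the leading symbol of the $\nu = m$ term contributes both $a_{q,m}(0)\xi^m$ and its reflection, so one must exclude both rays $a_{j,m}(0)\cdot\overline{\R}_+$ and $(-1)^m a_{j,m}(0)\cdot\overline{\R}_+$ from $\Lambda$. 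This guarantees $\Lambda \subset \bgres(A_{p,\wedge})$ for every $p$, which is the starting point needed to even speak of resolvents of the model operators.

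Next I would pass from the domain $\Dom$ on the graph to the associated domains $\Dom_{p,\wedge}$ for the model operators via the isomorphism $\theta_p$, using \eqref{AssocDomain}, just as in Theorem~\ref{SectorMinGrowthGraph}. The key structural fact from \cite{GKM1, GKM3} is that for an elliptic cone operator $A_{p,\wedge}$ with a $\kappa$-invariant domain the resolvent estimate along $\Lambda$ holds, and for a general domain $\Dom_{p,\wedge}$ the sector $\Lambda$ is of minimal growth for $A_{p,\wedge}|_{\Dom_{p,\wedge}}$ if and only if it is of minimal growth for $A_{p,\wedge}$ with each limiting domain in $\Omega^{-}(\Dom_{p,\wedge})$ with respect to the $\kappa$-flow. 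The hypothesis of the theorem, namely invertibility of $A_{p,\wedge} - \lambda : \Dom_0 \to L^2$ for all $\Dom_0 \in \Omega^{-}(\Dom_{p,\wedge})$ and all $\lambda \in \Lambda$ with $|\lambda| = 1$, upgrades automatically to the full resolvent bound $\|(A_{p,\wedge}|_{\Dom_0} - \lambda)^{-1}\| = O(|\lambda|^{-1})$ by the $\kappa$-homogeneity relation \eqref{kappaHomogeneity} (generalized to order $m$, where $A_{p,\wedge} - \varrho^m\lambda = \varrho^m \kappa_\varrho (A_{p,\wedge} - \lambda)\kappa_\varrho^{-1}$) together with the compactness of $\{\lambda \in \Lambda : |\lambda| = 1\}$ and the fact that the limiting domains form a compact set. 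Hence $\Lambda$ is a sector of minimal growth for each $A_{p,\wedge}|_{\Dom_{p,\wedge}}$, and then for the direct sum $A_\wedge = \bigoplus_{p\in V} A_{p,\wedge}$ with the induced domain.

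Finally I would invoke \cite[Theorem~6.9]{GKM2}, which states that for an elliptic cone operator $A$ with domain $\Dom$, a sector $\Lambda$ is of minimal growth for $A_{\Dom}$ provided it is of minimal growth for the model operator $A_\wedge$ with the associated domain and $A - \lambda$ is parameter-elliptic in the interior; this is the same closing step used in Theorem~\ref{SectorMinGrowthGraph}. The main obstacle is not the logical skeleton, which is identical to the second-order case, but verifying that the general results of \cite{GKM1, GKM2, GKM3} genuinely apply: one must check that the operators $A_j$ on the edges, rewritten via $x$ in the form \eqref{coneoperator}, are bona fide elliptic cone operators in the sense of those papers (in particular that the coefficients $a_{q,\nu}(x)$ are smooth up to $x=0$ and $a_{q,m}(0) \neq 0$), that the spaces $\Sing_p(A)$ and the maps $\theta_p$ are exactly the ones produced by the Mellin-transform/indicial-polynomial algorithm there, and that the $\kappa$-flow on the graph side matches the one on the model side under $\theta_p$ — this last compatibility is what makes $\Omega^{-}(\Dom_{p,\wedge})$ the correct object to test. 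Once these identifications are in place the proof is a direct citation of the cited theorems, so the bulk of the work is bookkeeping rather than new analysis.
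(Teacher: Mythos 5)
Your proposal follows essentially the same route as the paper: Theorem~\ref{GeneralTheorem} is obtained there exactly as Theorem~\ref{SectorMinGrowthGraph}, namely by transferring the coupling-condition domain to the model operators via $\theta_p$ as in \eqref{AssocDomain}, invoking the limiting-domain criterion of \cite{GKM1,GKM3} (of which your $\kappa$-homogeneity-plus-compactness upgrade is the underlying mechanism), and closing with \cite[Theorem~6.9]{GKM2} under the symbol condition that excludes the rays $a_{j,m}(s)\cdot\overline{\R}_+$ and $(-1)^m a_{j,m}(s)\cdot\overline{\R}_+$ from $\Lambda$. The verifications you list at the end (smoothness of the $a_{q,\nu}$ up to $x=0$, ellipticity at the boundary, and compatibility of $\theta_p$ with the $\kappa$-flow) are precisely the bookkeeping the cited works supply, so your argument is correct and coincides with the paper's.
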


As our arguments from the previous sections indicate, in order to 
understand the spectral properties of $A$ on the graph $\Gamma$, it is 
essential to analyze the asymptotics of eigenfunctions of the model 
operator $A_{p,\wedge}$ at each vertex $p$.  Since, in the case at hand, 
the $A_{p,\wedge}$ are ordinary differential operators on half-lines with 
regular singular points located at the origin, their eigenfunctions are 
special functions whose asymptotic behavior can sometimes be found
in the existing literature.

The class of second order differential operators with a Coulomb type 
potential, as considered in this paper, constitutes a special case of
the situation above, and Theorem~\ref{SectorMinGrowthGraph} is an 
instance of Theorem~\ref{GeneralTheorem}.  In this case, 
the operators $A_{p,\wedge}$ have a simple structure that makes it 
even possible to compute their eigenfunctions explicitly.  We also 
have that the limiting set $\Omega^{-} \bigl(\Dom_{p,\wedge}\bigr)$ 
consists of a single domain, see Proposition~\ref{LimitingDomains}. 
Moreover, using explicit matrix representations for the coupling
conditions, the invertibility condition for $A_{p,\wedge}-\lambda$ over 
a sector $\Lambda$ can be expressed as a simple determinant condition, 
see Lemma~\ref{InvEverywhere} and Proposition~\ref{TheoremWedge}.

The simplicity of all these very explicit results is certainly not 
representative for the case of general regular singular operators. 
However, they serve to illustrate the strengths of our approach and 
give a new perspective for the study of resolvents and evolution
equations on graphs.


\end{document}